\newcounter{minutes}\setcounter{minutes}{\time}
\newcounter{hours}\setcounter{hours}{\time}
\title{Ptolemy constant and uniformity}
\author{Eero Harmaala}
\address{Department of Mathematics and Statistics\\
  University of Turku\\ Turku, Finland}
\email{eero.harmaala@gmail.com}
\author{Riku Kl\'en }
\address{Turku PET Centre\\
  University of Turku\\ Turku, Finland}
\email{riku.klen@utu.fi}
\keywords{Ptolemy constant, uniformity constant, uniform domain}\subjclass[2010]{51M05, 30F45}
\theoremstyle{plain}
\newtheorem{thm}[equation]{Theorem}
\newtheorem{cor}[equation]{Corollary}
\newtheorem{lem}[equation]{Lemma}
\newtheorem{example}[equation]{Example}
\newtheorem{prop}[equation]{Proposition}
\theoremstyle{definition}
\theoremstyle{remark}
\newtheorem{rem}[equation]{Remark}
\numberwithin{equation}{section}
\newcommand{\B}{\mathbb{B}^2}
\newcommand{\R}{\mathbb{R}^2}
\newcommand{\UH}{\mathbb{H}^2}
\newcommand{\C}{\mathbb{C}}
\newcommand{\Hn}{ {\mathbb{H}^n} }
\newcommand{\Bn}{ {\mathbb{B}^n} }
\newcommand{\Rn}{ {\mathbb{R}^n} }
\newcommand{\ang}{{ \rm ang}\,}
\renewcommand{\Im}{{ \rm Im}\,}
\renewcommand{\Re}{{ \rm Re}\,}
\newcommand{\arcsinh}{\,\textnormal{arsinh}\,}
\newcommand{\addresseshere}{%
  \enddoc@text\let\enddoc@text\relax
}
\begin{document}


\begin{abstract} 
  We study the Ptolemy constant and the uniformity constant in various plane domains including triangles, quadrilaterals and ellipses. To obtain our results, we use M\"obius transformations and the quasihyperbolic metric.
\end{abstract}

\maketitle

\section{Introduction}

The classical Ptolemy theorem is a relation between the four sides and two diagonals of a cyclic quadrilateral. In \cite[10.9.2]{Ber09} it is formulated as Ptolemy inequality:
\begin{prop}
  Let $ABCD$ be a quadrilateral. Then
  \[
    AB \cdot CD + AD \cdot BC \ge AC \cdot BD
  \]
  and inequality holds as equality if and only if the points $A$, $B$, $C$ and $D$ lie on a circle or on a line.
\end{prop}
Based on this fact we define Ptolemy constant, which can be used to measure roundness of plane curves.

Let $J \subset \overline{\mathbb{R}^2}$ be a Jordan curve. For points $a,b,c,d \in J$ in this order we define
\[
  p(a,b,c,d) = \frac{|a-b||c-d|+|a-d||b-c|}{|a-c||b-d|}.
\]
If one of the points $a$, $b$, $c$ or $d$ is $\infty$, then $p(a,b,c,d)$ is considered as a limit. Note that $p$ is invariant under M\"obius transformations \cite{Vuo88}.

Let $D \subset \overline{\mathbb{R}^2}$ be a domain, whose $\partial D$ is a Jordan curve. We define the \emph{Ptolemy constant} as
\begin{equation}\label{ptolemys constant}
  P(D) = \sup_{a,b,c,d \in \partial D} p(a,b,c,d),
\end{equation}
where point $a$, $b$, $c$ and $d$ occur in this order when traversing the Jordan curve in positive direction. 

One motivation for our study of the Ptolemy constant arises from a result due to L.V. Ahlfors \cite{Ahl63}, which was later reformulated by S. Rickman \cite{Ric66} as follows:

\emph{A Jordan curve $J \subset \overline{\C}$ is a quasicircle iff $\sup p(a,b,c,d)$ exists and is finite for ordered points $a,b,c,d \in J$.}

For generalisations of the Ptolemy constant to normed spaces see \cite{PinReiSha01,Zuo12}. The Ptolemy theorem has also been considered in the spherical and the hyperbolic geometries, see \cite{Val70,Val70B}.

As far as we know, explicit formulas for the Ptolemy constant for specific plane domains have not been studied in the literature before the unpublished licentiate thesis of P. Seittenranta \cite{Sei96} in 1996.

In this article we study the Ptolemy constant and try to find a connection between the Ptolemy constant and the uniformity constant, which we introduce next.

Let $G \subsetneq \Rn$ be a domain. We define the \emph{quasihyperbolic length} of a rectifiable curve $\gamma \subset G$ by
\[
  \ell_k(\gamma) = \int_\gamma \frac{|dx|}{d_G(x)},
\]
where $d_G(x) = d(x,\partial G)$. For $x,y \in G$ we define the \emph{quasihyperbolic distance} (also called the \emph{quasihyperbolic metric}) by
\[
  k_G(x,y) = \inf \ell_k (\gamma),
\]
where the infimum is taken over all rectifiable curves joining $x$ and $y$ in $G$.

For $x,y \in G$ we define the \emph{distance ratio metric} by
\[
  j_G(x,y) = \log \left( 1+\frac{|x-y|}{\min \{ d_G(x),d_G(y) \}} \right).
\]
We call the domain $G$ \emph{uniform}, if there exists a constant $A$ such that
\[
  k_G(x,y) \le A j_G(x,y)
\]
for all $x,y \in G$. The \emph{uniformity constant} is defined by
\[
  A_G = \inf \{ A \ge 1 \colon k_G(x,y) \le A j_G(x,y) \textrm{ for all } x,y \in G \}.
\]
F.W. Gehring and B.G. Osgood proved that uniform domains are invariant under quasiconformal mappings \cite{Geh79}. Another interesting property of uniformity is that it is preserved under bilipschitz mappings.
\begin{lem}\cite[Exercise 3.17]{Vuo88}\label{lem:Vuo88 lemma}
  Let $f \colon \Rn \to \Rn$ be an $L$-bilipschitz, that is
  \[
    \frac{|x-y|}{L} \le |f(x)-f(y)| \le L |x-y|
  \]
  for all $x,y \in \Rn$. If $G \subset \Rn$ is uniform, then
  $f(G)$ is uniform and $A_{f(G)} \le L^4 A_{G}$.
\end{lem}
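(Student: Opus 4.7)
The plan is to show that both the quasihyperbolic metric $k$ and the distance-ratio metric $j$ are each distorted by at most a factor of $L^{2}$ under the bilipschitz map $f$, and then combine these two bounds to obtain the $L^{4}$ factor in the uniformity constant.

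First I would establish the basic boundary-distance comparison $d_{G}(x)/L \le d_{f(G)}(f(x)) \le L\, d_{G}(x)$ for every $x \in G$. Since $f$ is an $L$-bilipschitz self-homeomorphism of $\Rn$, it sends $\partial G$ bijectively onto $\partial f(G)$, so both inequalities follow by taking infima of $|f(x)-f(y)|$ over $y \in \partial G$ and applying the bilipschitz condition. From this, for any rectifiable curve $\gamma$ in $G$ the image $f\circ\gamma$ is rectifiable in $f(G)$, and the relations $|(f\circ\gamma)'|\le L|\gamma'|$ together with $d_{f(G)}(f(\gamma))\ge d_{G}(\gamma)/L$ yield $\ell_{k}(f\circ\gamma) \le L^{2}\,\ell_{k}(\gamma)$. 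Taking infima over curves gives $k_{f(G)}(f(x),f(y)) \le L^{2}\, k_{G}(x,y)$.

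Next I would handle the $j$-metric. Writing $s = |x-y|/\min\{d_{G}(x),d_{G}(y)\}$, the bilipschitz estimate together with the boundary-distance bound gives
\[
  \frac{|f(x)-f(y)|}{\min\{d_{f(G)}(f(x)),d_{f(G)}(f(y))\}} \le L^{2} s.
\]
The main technical step is to convert the argument of the logarithm: by Bernoulli's inequality $(1+s)^{L^{2}} \ge 1 + L^{2} s$ for $s\ge 0$ and $L^{2} \ge 1$, so $\log(1+L^{2}s) \le L^{2}\log(1+s)$. Applying this to the image side (or, by symmetry, to the preimage side) gives $j_{G}(x,y) \le L^{2}\, j_{f(G)}(f(x),f(y))$. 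The little Bernoulli step is the only place where one has to be careful, and I expect it to be the main (minor) obstacle; everything else is a direct application of the bilipschitz definition.

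Finally, I would chain the two estimates. Assuming $G$ is uniform with constant $A_{G}$,
\[
  k_{f(G)}(f(x),f(y)) \le L^{2}\, k_{G}(x,y) \le L^{2} A_{G}\, j_{G}(x,y) \le L^{4} A_{G}\, j_{f(G)}(f(x),f(y)).
\]
Since $f$ is a bijection of $\Rn$, this inequality holds for every pair in $f(G)$, so $f(G)$ is uniform and $A_{f(G)} \le L^{4} A_{G}$.
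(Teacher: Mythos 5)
Your argument is correct. The paper does not prove this lemma at all --- it is quoted from \cite[Exercise 3.17]{Vuo88} as a known fact --- so there is nothing to compare against; your proof is the standard one, splitting the distortion as $L^{2}$ for $k$ (via $\ell(f\circ\gamma)\le L\,\ell(\gamma)$ and $d_{f(G)}(f(z))\ge d_G(z)/L$) and $L^{2}$ for $j$ (via Bernoulli, $\log(1+L^{2}s)\le L^{2}\log(1+s)$). One small bookkeeping point: the displayed $j$-inequality you wrote bounds the image-side ratio by $L^{2}s$, which yields $j_{f(G)}(f(x),f(y))\le L^{2}j_G(x,y)$ --- the reverse of what the chaining needs; your parenthetical remark about applying the same estimate to $f^{-1}$ (also $L$-bilipschitz) is what actually delivers the required $j_G(x,y)\le L^{2}j_{f(G)}(f(x),f(y))$, so make that the primary statement rather than an aside.
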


One of the leading ideas behind our study was to establish a connection between the Ptolemy constant and the uniformity constant. These constants satisfy equality $A_G = 1 + P(G)$ in the unit ball $\Bn$, the upper half space $\Hn$ and the angular domain $S_\alpha$ for $\alpha \in (0,\pi]$. Based on our study it is clear that equality is not true in all domains, but we could not find a clear connection between the two quantities. However, we can pose the following conjecture: for any domain $D \subset \R$ whose boundary $\partial G$ is a Jordan curve, we have $A_G \ge 1+ P(G)$.

Note that when considering the Ptolemy constant it is essential to consider only domains whose boundary is a Jordan curve. If for example the boundary curve is not closed, it is easy to see that there is no connection between the Ptolemy constant and the uniformity constant, see Example \ref{example:no connection}.

One of the main results in P. Seittenranta's thesis \cite{Sei96} is the following proposition:

\begin{prop}\label{main1}
  For a triangle $T$ with the smallest angle $\alpha$
  \[
    P(T) = \frac{1}{\sin \frac{\alpha}{2}} \ge 2. 
  \]
\end{prop}

We continue this study and find a new proof for Proposition \ref{main1}. This article is based on \cite{Har12} and our main results consists of six theorems. Theorems~\ref{thm:doubleangluardomain}, \ref{thm:P(G) for parallelogram} and \ref{thm:P(G) for ellipse} consider the Ptolemy constant, and Theorems~\ref{UC in triangle}, \ref{thm:A for convex polygon} and \ref{thm:UC for ellipse} consider the uniformity constant.

\section{Preliminary results}

In this section we introduce preliminary results. Proofs of these results are provided in Appendix.

For $\alpha \in (0,2\pi)$ we denote angular domain by
\[
  S_\alpha = \{ r e^{it} \colon r>0, \, t \in (0,\alpha) \}.
\]
and for $\alpha,\beta \in (0,\pi)$ with $\alpha+\beta \ge \pi$ we denote double angular domain by
\begin{equation}\label{eqn:double angular domain}
  S_{\alpha,\beta} = S_\alpha \cap \{ 1+r e^{it} \colon r>0, \, t \in (\pi-\beta,\pi) \}.
\end{equation}

The following proposition gives the circumcenter of a triangle in complex number notation. The next results is well known, but since we have not been able to find suitable reference, we give a proof for completeness. A similar type of result is given in \cite[p. 85]{AndAnd06}.

\begin{prop}\label{prop:circumcenter}
  Let $a,b,c \in \C$ be vertices of a triangle $T$. The circumcenter of $T$ is
  \[
    \frac{(b-c)|a|^2+(c-a)|b|^2+(a-b)|c|^2}{(b-c)\bar{a}+(c-a)\bar{b}+(a-b)\bar{c}}.
  \]
\end{prop}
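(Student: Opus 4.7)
The plan is to translate the defining property of the circumcenter, namely that it is equidistant from the three vertices, into a linear system over $\mathbb{C}$ in the two unknowns $z$ and $\bar z$, and then solve by Cramer's rule, checking at the end that the resulting quotient matches the claimed expression.

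First I would write the circumcenter $z$ as the unique solution of $|z-a|^2 = |z-b|^2 = |z-c|^2$. Expanding each modulus via $|w|^2 = w\bar w$, the quadratic term $|z|^2$ cancels on both sides of every pairwise equality, which leaves two linear equations
\[
  (\bar b - \bar a)\, z + (b-a)\, \bar z = |b|^2 - |a|^2,
\qquad
  (\bar c - \bar b)\, z + (c-b)\, \bar z = |c|^2 - |b|^2.
\]
This is a $2\times 2$ linear system for the pair $(z,\bar z)$, treating $z$ and $\bar z$ as independent unknowns (which is legitimate, since we only need to exhibit one solution and verify it).

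Next I would apply Cramer's rule to read off $z$. The denominator in the proposition should equal
\[
  \det\begin{pmatrix} \bar b - \bar a & b-a \\ \bar c - \bar b & c-b \end{pmatrix},
\]
and the numerator should equal the analogous determinant with the first column replaced by the right-hand side vector $(|b|^2-|a|^2,\,|c|^2-|b|^2)^T$. The main task, and the only real obstacle, is the algebraic bookkeeping: one must expand both determinants and rearrange to recognize
\[
  (\bar b - \bar a)(c-b) - (b-a)(\bar c - \bar b) = (b-c)\bar a + (c-a)\bar b + (a-b)\bar c,
\]
and similarly
\[
  (|b|^2-|a|^2)(c-b) - (|c|^2-|b|^2)(b-a) = (b-c)|a|^2 + (c-a)|b|^2 + (a-b)|c|^2,
\]
after which the claimed formula follows at once. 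A small sanity check on the side is that the denominator is twice the signed area bivector of the triangle $abc$ (up to a factor of $i$), hence nonzero precisely because $a,b,c$ are noncollinear, so division is justified.

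As a brief sketch of an alternative I might mention, one can also derive the same formula directly by parametrizing $z$ as the intersection of two perpendicular bisectors in complex form, i.e.\ as the solution of $\mathrm{Re}\bigl((z - \tfrac{a+b}{2})\overline{(b-a)}\bigr) = 0$ together with the analogous equation for the segment $bc$; this leads to the same linear system and the same determinants, so it is really a notational variant rather than a distinct approach.
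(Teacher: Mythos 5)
Your proof is correct. The algebra checks out: expanding $|z-a|^2=|z-b|^2=|z-c|^2$ does give the stated linear system, and both determinants expand to exactly the numerator and denominator in the proposition; moreover the denominator equals $2i\,\Im(\bar a b+\bar b c+\bar c a)$, which is nonzero precisely when $a,b,c$ are not collinear, so the division is legitimate. Your remark about treating $z$ and $\bar z$ as independent unknowns is also fine once phrased carefully: the genuine circumcenter $z_*$ yields the solution $(z_*,\bar z_*)$ of the formal $2\times 2$ system, and since that system is nonsingular its unique solution, read off by Cramer's rule, must be $(z_*,\bar z_*)$.

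The route differs from the paper's in its starting characterization. The paper takes the circumcenter as the intersection of the perpendicular bisectors of $[a,b]$ and $[b,c]$, written parametrically as $(a+b)/2+i(b-a)s=(b+c)/2+i(c-b)t$ together with the conjugate equation, solves for the parameter $s$, and substitutes back. You instead use the equidistance property, which linearizes immediately because the $|z|^2$ terms cancel, and then invoke Cramer's rule. Both approaches reduce to a $2\times 2$ linear system and involve comparable bookkeeping; yours has the small advantage that the symmetric form of the answer in $a,b,c$ emerges directly from the determinants, whereas the paper's parametric method requires an extra substitution step and produces an intermediate expression for $s$ that is not symmetric. Your closing remark correctly identifies the paper's method as the notational variant you sketch at the end.
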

%
%
\begin{lem}\label{lem:circle_outer_angle}
  Let $a, b, c, d \in \C$ be distinct points forming a convex polygon $abcd$ and $\alpha$, $\beta$, $\gamma$ and $\delta$ be the angles of the polygon, respectively. Then the outer angle between circles $\mathcal{C}_{A} = \mathcal{C}(a,b,d)$ and $\mathcal{C}_{C} = \mathcal{C}(b,c,d)$ is equal to $\min \{ \alpha+\gamma,\beta+\delta \}$. Also the outer angle between circles $\mathcal{C}_{B} = \mathcal{C}(a,b,c)$ and $\mathcal{C}_{D} = \mathcal{C}(c,d,a)$ is equal to $\min \{ \alpha+\gamma,\beta+\delta \}$.
\end{lem}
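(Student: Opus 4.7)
The plan is to apply the inscribed angle theorem together with the tangent--chord (alternate segment) theorem at a common intersection point of the two circles. For the first assertion, both $\mathcal{C}_A$ and $\mathcal{C}_C$ pass through $b$ and $d$, so they share the chord $bd$, which is a diagonal of the convex quadrilateral. I place $b$ at the origin and $d$ on the positive real axis; convexity then forces $a$ and $c$ to lie in opposite half-planes, say $a$ above and $c$ below. The interior angle of the quadrilateral at $a$ coincides with $\angle bad$ in triangle $abd$, which is the inscribed angle subtended by chord $bd$ from $a$ on $\mathcal{C}_A$, and therefore equals $\alpha$; symmetrically, the inscribed angle at $c$ on $\mathcal{C}_C$ subtending $bd$ equals $\gamma$.

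By the alternate segment theorem, the tangent to $\mathcal{C}_A$ at $b$ meets $bd$ at angle $\alpha$ on the lower side of the chord, so as an unsigned line through the origin it has slope angle $\pi - \alpha$; analogously the tangent to $\mathcal{C}_C$ at $b$ has slope angle $\gamma$. These two lines form the supplementary pair $|\pi - \alpha - \gamma|$ and $\pi - |\pi - \alpha - \gamma|$, the latter being $\min\{\alpha + \gamma,\ 2\pi - (\alpha + \gamma)\} = \min\{\alpha+\gamma,\ \beta+\delta\}$ since $\alpha + \beta + \gamma + \delta = 2\pi$. To identify the outer angle with this branch I interpret it as the angular measure of the wedge at $b$ lying in the exterior of both disks (equivalently, by vertical angles, in the intersection of the two disks). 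Locally at $b$ the disk bounded by $\mathcal{C}_A$ fills the half-plane of its tangent that contains the circumcenter $O_A$, which lies on the perpendicular bisector of $bd$, above the chord when $\alpha < \pi/2$ and below when $\alpha > \pi/2$; the analogous statement holds for $O_C$ with $\gamma$ in place of $\alpha$. A short case analysis on the four sign combinations of $\alpha - \pi/2$ and $\gamma - \pi/2$ then shows that in every configuration the overlap wedge at $b$ has measure exactly $\min\{\alpha + \gamma,\ \beta + \delta\}$.

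The second assertion follows by repeating the whole argument with the other diagonal $ac$ in place of $bd$: circles $\mathcal{C}_B$ and $\mathcal{C}_D$ share the chord $ac$, the inscribed angles subtended by $ac$ at $b$ on $\mathcal{C}_B$ and at $d$ on $\mathcal{C}_D$ are $\beta$ and $\delta$ respectively, and the analogous computation yields outer angle $\min\{\beta + \delta,\ \alpha + \gamma\}$, the same quantity. The main obstacle is the branch identification in the middle step, since the positions of the centres $O_A$ and $O_C$ relative to the chord depend on whether the corresponding angles exceed $\pi/2$; once the local half-planes have been pinned down in each subcase, the angular measure of the outer wedge is immediate from the tangent slopes.
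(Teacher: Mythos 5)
Your argument is correct and is essentially the paper's own proof: both rest on the tangent--chord (alternate segment) relation at an intersection point of the common chord $bd$ (respectively $ac$), with convexity placing $a$ and $c$ on opposite sides of that chord so that the angles $\alpha$ and $\gamma$ add, giving $\alpha+\gamma$ or its complement $2\pi-(\alpha+\gamma)=\beta+\delta$. The only points the paper treats that you leave implicit are the degenerate cyclic case $\alpha+\gamma=\pi$, where $\mathcal{C}_A=\mathcal{C}_C$ and the tangent-line argument does not literally apply, and the explicit execution of the four-case branch identification you defer; both are routine.
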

%
%
%
%

\begin{lem}\label{lem:convex_polygon_Stheta}
  Let $a, b, c, d \in \C$ be distinct points forming a convex polygon $abcd$ and $\alpha$, $\beta$, $\gamma$, $\delta$ be the angles of the polygon, respectively. Then there exists a M\"obius transformation $m$ that maps the points $a, b, c, d$ in the same order to the curve $S_\theta$ with
  \[
    \theta = \min \{ \alpha + \gamma , \beta + \delta \} \hspace*{2ex} \in (0, \pi].
  \]
\end{lem}


\begin{lem}\label{lem:simplequadrilateral}
  Let $(a,b,c,d)$ be a simple quadrilateral with inner angles $\alpha$, $\beta$, $\gamma$, $\delta$. There exists a M\"obius mapping $m$, which maps the points $a$, $b$, $c$ and $d$ in this order to the curve $S_\theta$ for
  \[
    \theta = \min \{ \alpha+\gamma,\beta+\delta \} \in (0,\pi].
  \]
  Especially, $p(a,b,c,d) \le P(S_\theta)$ and mapping $m$ can be chosen so that $m(a)=0$, $m(b)=1$, $m(c)=\infty$ and $m(d)=t e^{i\theta}$, $t>0$.
\end{lem}
%
%
%
%

\begin{prop}\label{prop:quadrilateral}
  Let $(a,b,c,d)$ be a simple quadrilateral with opposite angles $\alpha$ and $\gamma$. There exists a M\"obius mapping $m$ such that $(m(a),m(b),m(c),m(d))$ is a parallelogram with $(\alpha+\gamma)/2$. Especially
  \[
    p(a,b,c,d) = p(m(a),m(b),m(c),m(d)).
  \]
\end{prop}
%

\begin{lem}\label{lem:visualanglemetricH2}
  Let $x,y \in \UH$ and $z \in \overline{\mathbb{R}}$. Then $|\measuredangle (x,z,y)|$ obtains its largest value when the circle through points $x$, $y$ and $z$ touches the real axis, and smallest value when $z$ is the intersection point of the real axis and the line through the points $x$ and $y$.
\end{lem}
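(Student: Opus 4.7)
The proof rests on the inscribed angle theorem in the following form: for distinct points $x, y$ and a circle $C$ through both, bounding an open disk $D_C$, the angle $|\measuredangle(x, z, y)|$ is constant on each of the two open arcs of $C \setminus \{x, y\}$; and for any $z$ off the line through $x$ and $y$, $|\measuredangle(x, z, y)|$ is strictly larger than the inscribed angle of the arc of $C$ on the same side of that line as $z$ when $z \in D_C$, and strictly smaller when $z \notin \overline{D}_C$.

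First I would handle the minimum. The line through $x$ and $y$ meets $\overline{\mathbb{R}}$ either at a unique real point $z_0$, lying outside the segment $[x,y]$ since $[x,y] \subset \UH$, or only at $\infty$ when that line is parallel to $\mathbb{R}$. At such a $z$ the triple $x, y, z$ is collinear, so $|\measuredangle(x, z, y)| = 0$, which is the global minimum of the non-negative function $z \mapsto |\measuredangle(x, z, y)|$.

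For the maximum: $\overline{\mathbb{R}}$ is a topological circle (hence compact) and $z \mapsto |\measuredangle(x, z, y)|$ extends continuously to it, so a maximizer $z^* \in \overline{\mathbb{R}}$ exists. The maximum is strictly positive (any $z \in \mathbb{R}$ off the line through $x, y$ already gives a positive value), so $z^*$ lies off this line, and the three points $x, z^*, y$ form a genuine triangle with circumscribed circle $C = \mathcal{C}(x, y, z^*)$. If $\mathbb{R}$ crossed $C$ transversally at $z^*$, then a small perturbation of $z^*$ along $\mathbb{R}$ into $D_C$ would remain on the same side of line $xy$ as $z^*$ (since $z^*$ itself is off this line), and by the comparison above would strictly increase $|\measuredangle(x, z, y)|$, contradicting maximality. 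Hence $C$ must be tangent to $\mathbb{R}$ at $z^*$, as claimed.

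The main obstacle is the geometric comparison lemma --- that moving $z$ inward across the circumscribed circle, on a fixed side of the chord, strictly increases $|\measuredangle(x, z, y)|$ --- which is classical but needs care to state precisely. Once established, the rest is compactness together with a local perturbation argument. As an alternative route, one could parametrize $z = t \in \mathbb{R}$, study the signed angle $\phi(t) = \arg((x-t)/(y-t))$, and check by a short computation that critical points of $\phi$ correspond exactly to tangency of $\mathcal{C}(x, y, t)$ with $\mathbb{R}$, while zeros of $\phi$ correspond to collinearity of $x, y, t$.
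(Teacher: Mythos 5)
Your argument is correct, but it takes a genuinely different route from the paper's. The paper disposes of this lemma in one line: the minimum is $0$ because the three points are collinear, and the maximum is quoted from \cite[Section 3.3]{KleLinVuoWan14}, where the tangency characterization appears as part of the computation of the visual angle metric of the half-plane. You instead give a self-contained proof: compactness of $\overline{\mathbb{R}}$ and continuity of $z\mapsto|\measuredangle(x,z,y)|$ (with value $0$ at $\infty$) produce a maximizer $z^*$; positivity of the maximum rules out $z^*=\infty$ and collinearity, so $\mathcal{C}(x,y,z^*)$ exists; and if $\mathbb{R}$ crossed this circle transversally at $z^*$, a small move along $\mathbb{R}$ into the open disk --- which stays on the same side of the line through $x$ and $y$, since $z^*$ is off that line --- would strictly increase the angle, a contradiction, so the circle is tangent. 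The one ingredient you flag as needing care (a point inside the disk and on the same side of the chord as a given arc sees the chord at a strictly larger angle than the inscribed angle of that arc, a point outside at a strictly smaller one) is indeed classical: extend $[x,w]$ to meet the circle and apply the exterior angle theorem. Your treatment of the minimum, including the case where the line through $x$ and $y$ is horizontal so that the intersection point is $\infty$, matches the paper's (trivial) observation. What your route buys is independence from the external reference and a perturbation argument that localizes cleanly at the maximizer; what the citation buys is brevity and, implicitly, the explicit value of the maximal angle, which the qualitative tangency statement alone does not provide.
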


\begin{lem}\label{lem:kompleksilukutulos}
  If $x > 0$, $y > 0$ and $c \in (0,1)$, then
  \[
    \arg (x+icy) > c \cdot \arg (x+iy).
  \]
\end{lem}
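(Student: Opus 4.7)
The plan is to reduce the claim to a statement about the arctangent and then invoke concavity. Since $x>0$ and $y>0$, the point $x+iy$ lies in the open first quadrant, so $\arg(x+iy)=\arctan(y/x)$, and likewise $\arg(x+icy)=\arctan(cy/x)$. Setting $t=y/x>0$, the inequality to prove becomes
\[
  \arctan(ct) > c\,\arctan(t), \qquad t>0,\ c\in(0,1).
\]

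Next I would observe that $\arctan$ is strictly concave on $[0,\infty)$, since $\arctan''(s)=-2s/(1+s^{2})^{2}<0$ for $s>0$, while $\arctan(0)=0$. Writing $ct$ as the convex combination $ct=(1-c)\cdot 0+c\cdot t$ and applying strict concavity of $\arctan$ between the points $0$ and $t$ gives
\[
  \arctan(ct) = \arctan\bigl((1-c)\cdot 0+c\cdot t\bigr) > (1-c)\arctan(0)+c\arctan(t) = c\arctan(t),
\]
which is exactly the required inequality; strictness holds because $t>0$ and $c\in(0,1)$, so the two interpolation points are distinct and the weight is nontrivial.

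There is essentially no obstacle here: the only mild point is to justify that the inequality in the concavity step is strict, which follows from $\arctan''<0$ on the open interval $(0,t)$. If one prefers to avoid appealing to concavity, the same conclusion follows from a one-variable calculus argument: let $h(t)=\arctan(ct)-c\arctan(t)$; then $h(0)=0$ and
\[
  h'(t)=c\left(\frac{1}{1+c^{2}t^{2}}-\frac{1}{1+t^{2}}\right)>0
\]
for $t>0$, since $c^{2}t^{2}<t^{2}$, so $h(t)>0$ for every $t>0$, giving the same conclusion.
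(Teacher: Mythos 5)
Your proof is correct. It reaches the same reduction as the paper -- namely that the claim is equivalent to $\arctan(ct) > c\arctan t$ for $t=y/x>0$ -- but then diverges: the paper divides through by $ct$ and proves that $f(t)=(\arctan t)/t$ is strictly decreasing (by differentiating and checking that $g(t)=t-(1+t^2)\arctan t$ has $g'(t)=-2t\arctan t<0$), after which $ct<t$ gives the inequality. You instead invoke strict concavity of $\arctan$ on $[0,\infty)$ together with $\arctan(0)=0$ and interpolate between $0$ and $t$. These are two faces of the same fact (a concave function vanishing at the origin has decreasing slope ratio $f(t)/t$), so neither argument is deeper than the other; yours has the small advantage of resting on a textbook property of $\arctan$ rather than on an ad hoc auxiliary function, and your fallback computation with $h(t)=\arctan(ct)-c\arctan t$, $h(0)=0$, $h'(t)=c\bigl(\tfrac{1}{1+c^2t^2}-\tfrac{1}{1+t^2}\bigr)>0$ is arguably the most economical route of the three. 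One cosmetic note: the strictness of the concavity step is cleanest if you observe that $\arctan'(s)=1/(1+s^2)$ is strictly decreasing on all of $[0,\infty)$ (not just that $\arctan''<0$ on the open interval), which you essentially do.
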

%
%

\begin{lem}\label{lem:apulause_4.12}
  Let $E$ be the domain enclosed by the ellipse $\partial E = \{ (x_0,y_0) \in \R \colon (x_0/a)^2+(y_0/b)^2 = 1 \}$ and $b \le a$. If $z=(t,0) \in E$ and $|t|\le a-b^2/a$, then
  \[
    d(z,\partial E) = b \sqrt{1-\frac{t^2}{a^2-b^2}}
  \]
  and the closest points to $z$ in $\partial E$ are
  \[
    \left( \frac{t}{1-b^2/a^2} , \pm b\sqrt{ \left( \frac{at}{a^2-b^2} \right)^2 -1 } \right).
  \]
  If $z=(t,0) \in E$ and $a-b^2/a < |t| < a$, then $d(z,\partial E) = a-|t|$ and the closest point to $z$ in $\partial E$ is $(at/|t|,0)$.
\end{lem}
%
%

\section{Angular domain and triangle}

We begin by considering the angular domain. We prove first the result in the special case that one of the points in the supremum of \eqref{ptolemys constant} is origin. Since $p$ is invariant under M\"obius transformations it makes no difference which one of the point we choose to be origin and thus we let $b=0$.

\begin{lem}\label{lem:sector1}
  For $\alpha \in (0,\pi]$
  \[
    \sup_{t>0,\, d>0,\, c\in(0,d)} p(t e^{i\alpha},0,c,d) \ge \frac{1}{\sin \frac{\alpha}{2}}. 
  \]  
\end{lem}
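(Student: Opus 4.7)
The plan is to produce an explicit family of admissible quadruples whose Ptolemy ratio tends to $1/\sin(\alpha/2)$; the choices are suggested by the symmetry of $S_\alpha$ across the bisector $\arg z=\alpha/2$ and by sending $d\to\infty$.

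First I would write out, for $a=te^{i\alpha}$, $b=0$, $c\in(0,d)$,
\[
  p(te^{i\alpha},0,c,d)=\frac{t(d-c)+c\sqrt{t^2-2td\cos\alpha+d^2}}{d\sqrt{t^2-2tc\cos\alpha+c^2}}.
\]
Using $\sqrt{t^2-2td\cos\alpha+d^2}/d\to 1$ as $d\to\infty$, this expression converges to
\[
  \frac{t+c}{\sqrt{t^2-2tc\cos\alpha+c^2}}=\frac{|a-b|+|b-c|}{|a-c|},
\]
which is exactly the value of $p$ one would obtain by formally sending $d$ to infinity and invoking M\"obius invariance. Structurally this step collapses the denominator scale and reduces the bound to a question about a triangle with one vertex at the origin of $S_\alpha$.

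Second, I would choose $c=t$, placing $a$ and $c$ symmetrically about the bisector of $S_\alpha$. Then $|a-c|=t|e^{i\alpha}-1|=2t\sin(\alpha/2)$ and the limiting ratio simplifies to $\tfrac{2t}{2t\sin(\alpha/2)}=\tfrac{1}{\sin(\alpha/2)}$. Since $(t,c,d)=(t,t,d)$ is admissible for every $d>t$, the supremum in the statement is at least this value, which is the desired inequality.

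The only technical point is the asymptotic estimate $\sqrt{t^2-2td\cos\alpha+d^2}=d+O(1)$ as $d\to\infty$, which is elementary, so there is no serious obstacle. If one wished additionally to see that the symmetric choice $c=t$ is optimal for the limiting quantity, a short calculus argument on $f(r)=(1+r)/\sqrt{1-2r\cos\alpha+r^2}$ (with $r=c/t$) reduces the stationarity condition to $(1+\cos\alpha)(1-r)=0$, giving the unique critical point $r=1$; but this refinement is not required for the $\geq$ statement at hand.
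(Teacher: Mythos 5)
Your proof is correct and is essentially the same as the paper's: both use the symmetric configuration $|a|=|c|$ and degenerate so that the fourth point is infinitely far away relative to $a$ and $c$ (you fix $t$ and send $d\to\infty$; the paper fixes $d=1$ and sends $|a|=|c|\to 0$, which is the same family up to the scaling invariance of $p$). The only cosmetic difference is that you compute the exact limit while the paper settles for the lower bound $|ce^{i\alpha}-1|>|c-1|$ and an explicit $\varepsilon$.
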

\begin{proof}
  Let $a=ce^{i \alpha}$, $c \in (0,1)$ and $d=1$. Now
  \[
    p(c e^{i\alpha},0,c,1) = \frac{|c-1|+|ce^{i\alpha}-1|}{|e^{i\alpha}-1|}
  \]
  and since $|ce^{i\alpha}-1| > |c-1|$ and $|e^{i\alpha}-1| = 2 \sin \frac{\alpha}{2}$ we obtain
  \[
    p(c e^{i\alpha},0,c,1) > \frac{2|c-1|}{e^{i\alpha}-1} = \frac{1-c}{\sin \frac{\alpha}{2}}.
  \]
  We choose $c=\varepsilon \sin \frac{\alpha}{2}$ for $\varepsilon >0$ and then
  \[
    \sup_{t>0,\, d>0,\, c\in(0,d)} p(t e^{i\alpha},0,c,d) = \sup_{c >0} p(c e^{i\alpha},0,c,1) \ge \frac{1}{\sin \frac{\alpha}{2}}-\varepsilon.
  \]
  The assertion follows as we let $\varepsilon \to 0$.
\end{proof}

\begin{rem}
  Note that Lemma \ref{lem:sector1} includes also the case $d=\infty$ as $p$ is invariant under M\"obius transformations.
\end{rem}

\begin{lem}\label{lem:sector2}
  For $\alpha \in (0,\pi]$ and $t,c > 0$ we have $p(t e^{i \alpha},0,c,\infty) \le 1/\sin \frac{\alpha}{2}$.
\end{lem}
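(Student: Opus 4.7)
The plan is to compute $p(te^{i\alpha},0,c,\infty)$ directly and then reduce the desired inequality to an elementary one. Since $p$ is a Möbius-invariant quantity, as $d\to\infty$ the three factors $|c-d|$, $|a-d|$, $|b-d|$ each behave like $|d|$, so passing to the limit gives the standard formula
\[
  p(te^{i\alpha},0,c,\infty) \;=\; \frac{|te^{i\alpha}|+|c|}{|te^{i\alpha}-c|}.
\]
Applying the law of cosines to the triangle with vertices $te^{i\alpha}$, $0$, $c$, the denominator equals $\sqrt{t^{2}-2tc\cos\alpha+c^{2}}$.

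Next, I would use scale invariance of $p$ to normalize $c=1$, so that the inequality becomes
\[
  f(t) \;:=\; \frac{t+1}{\sqrt{t^{2}-2t\cos\alpha+1}} \;\le\; \frac{1}{\sin(\alpha/2)}.
\]
Rewriting the radicand via $\cos\alpha = 1-2\sin^{2}(\alpha/2)$ produces the useful identity
\[
  t^{2}-2t\cos\alpha+1 \;=\; (t-1)^{2} + 4t\sin^{2}(\alpha/2).
\]
Squaring and clearing the denominator reduces the target inequality to
\[
  (t+1)^{2}\sin^{2}(\alpha/2) \;\le\; (t-1)^{2} + 4t\sin^{2}(\alpha/2).
\]
Since $(t+1)^{2}-4t=(t-1)^{2}$, this collapses to $(t-1)^{2}\sin^{2}(\alpha/2)\le (t-1)^{2}$, which is immediate because $\sin^{2}(\alpha/2)\le 1$ for $\alpha\in(0,\pi]$.

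The only genuine subtlety is the limiting interpretation of $p$ when one of the four points is $\infty$; the remaining work is routine algebra, driven by the single substitution $\cos\alpha = 1-2\sin^{2}(\alpha/2)$. As a sanity check, equality occurs precisely when $t=c$ (or $\alpha=\pi$), which matches the sharpness witnessed by Lemma \ref{lem:sector1} (whose extremizing sequence used $c=t\to 0$ with $d=1$, the Möbius image of $t=c$, $d=\infty$).
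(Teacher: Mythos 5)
Your proposal is correct and follows essentially the same route as the paper: both reduce $p(te^{i\alpha},0,c,\infty)$ to $(t+c)/\sqrt{t^2+c^2-2tc\cos\alpha}$ via the law of cosines and then bound this one-variable expression by $1/\sin\frac{\alpha}{2}$. The only difference is the final step, where the paper locates the maximum at $t=c$ by differentiation while you verify the inequality algebraically through $\cos\alpha=1-2\sin^2\frac{\alpha}{2}$ and $(t+1)^2-4t=(t-1)^2$; your version is a slightly more elementary finish but not a different method.
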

\begin{proof}
  By the law of cosines we obtain
  \[
    p(te^{i\alpha},0,c,\infty) = \frac{t+c}{te^{i\alpha}-c} = \frac{t+c}{\sqrt{t^2+c^2-2tc \cos \alpha}} =:g(t).
  \]
  We easily obtain
  \[
    g'(t) = (c-t)\frac{c(1+\cos \alpha)}{(t^2+c^2-2tc \cos \alpha)^{3/2}}
  \]
  and thus the function $g$ obtains its maximum at $t=c$ and
  \[
    p(te^{i\alpha},0,c,\infty) \le g(c) = \frac{2}{|e^{i\alpha}-1|} = \frac{1}{\sin \frac{\alpha}{2}}. \qedhere
  \]
\end{proof}

\begin{prop}\label{prop:sectorb=0}
  For $\alpha \in (0,\pi]$
  \[
    \max \left\{ \sup_{t>0,\, d>0,\, c\in(0,d)} p(t e^{i\alpha},0,c,d) , \sup_{t>0,\, c>0} p(t e^{i\alpha},0,c,\infty) \right\} = \frac{1}{\sin \frac{\alpha}{2}}.
  \]
\end{prop}
\begin{proof}
  By Lemmas \ref{lem:sector1} and \ref{lem:sector2} we need to show that
  \[
    \sup_{t>0,\, d>0,\, c\in(0,d)} p(t e^{i\alpha},0,c,d) \le \frac{1}{\sin \frac{\alpha}{2}}. 
  \] 
  Let us denote $a=t e^{i\alpha}$ and consider a M\"obius transformation $m$ that fixes origin, maps $c$ onto positive real line and $d$ to $\infty$. Now by Lemma \ref{lem:sector2}
  \[
    p(m(a),0,m(c),m(d)) \le \frac{1}{\sin \frac{\measuredangle(m(a),0,m(c))}{2}} \le \frac{1}{\sin \frac{\measuredangle(a,0,c)}{2}} = \frac{1}{\sin \frac{\alpha}{2}},
  \]
  where the second inequality follows from the facts that $\measuredangle(m(a),0,m(c)) > \measuredangle(a,0,c)$ and the function $f(x)=1/\sin \tfrac{\alpha}{2}$ is decreasing on $(0,\pi]$.
\end{proof}

Next we consider the case where one of the points in the supremum of \eqref{ptolemys constant} is on one of the sides of the angular domain and the three other points are on the other side.

\begin{cor}\label{cor:sector1and3}
  Let $\alpha \in (0,\pi]$ and $a,b,c,d \in \partial S_\alpha$ be such points that one of them is on one side of $S_\alpha$ and the other three are on the other side. Then
  \[
    p(a,b,c,d) \le \frac{1}{\sin \frac{\alpha}{2}}.
  \]
\end{cor}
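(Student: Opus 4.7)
The plan is to derive a closed-form expression for $p(a,b,c,d)$ in terms of three viewing angles from the sector vertex, and then bound it by an elementary cosine estimate; this bypasses any attempt to M\"obius-reduce to Theorem \ref{thm:sectorb=0}. By the cyclic symmetry $p(a,b,c,d) = p(b,c,d,a)$ and the symmetry $p(a,b,c,d) = p(a,d,c,b)$ (both read off directly from the defining formula), we may assume $a$ is the isolated point and, after a rotation and a scaling, that $a = s e^{i\alpha}$ with $s>0$ and $b,c,d$ on the positive real axis with $0 < b < c < d$. The borderline case $\alpha = \pi$ collapses to four collinear points on the real line, for which a direct expansion of the Ptolemy ratio gives $p(a,b,c,d) = 1 = 1/\sin(\pi/2)$; from now on assume $\alpha \in (0,\pi)$.

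For each $r \in \{b,c,d\}$ let $\theta_r = \measuredangle(0,r,a) \in (0,\pi-\alpha)$ be the angle at the vertex $r$ of the triangle with vertices $0,r,a$, whose angle at $0$ equals $\alpha$. The law of sines gives $|a - r| = s\sin\alpha/\sin\theta_r$, while projecting $a$ onto the real axis yields the position identity $r = s\cos\alpha + s\sin\alpha\cot\theta_r = s\sin(\alpha+\theta_r)/\sin\theta_r$. Subtracting two such expressions and using the trigonometric identity
\[
  \sin(\alpha+\theta_r)\sin\theta_{r'} - \sin(\alpha+\theta_{r'})\sin\theta_r = \sin\alpha\sin(\theta_{r'}-\theta_r)
\]
produces
\[
  r - r' = \frac{s\sin\alpha \sin(\theta_{r'} - \theta_r)}{\sin\theta_r \sin\theta_{r'}}.
\]

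Substituting these formulas for $|a-r|$ and $r - r'$ into the definition of $p(a,b,c,d)$, the common factor $s^2\sin^2\alpha/(\sin\theta_b \sin\theta_c \sin\theta_d)$ cancels between numerator and denominator, leaving
\[
  p(a,b,c,d) = \frac{\sin(\theta_b - \theta_c) + \sin(\theta_c - \theta_d)}{\sin(\theta_b - \theta_d)} = \frac{\cos\frac{x-y}{2}}{\cos\frac{x+y}{2}},
\]
where $x = \theta_b - \theta_c$ and $y = \theta_c - \theta_d$, and the second equality is the sum-to-product identity applied separately to numerator and denominator. Since $r \mapsto \theta_r$ is strictly decreasing from $\pi - \alpha$ to $0$ on $(0,\infty)$, we have $x,y > 0$ and $x + y = \theta_b - \theta_d < \pi - \alpha$, so $(x+y)/2 \in (0,(\pi-\alpha)/2) \subset (0,\pi/2)$. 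On this interval $\cos$ is decreasing, so $\cos((x+y)/2) > \cos((\pi-\alpha)/2) = \sin(\alpha/2)$, and combining with $\cos((x-y)/2) \le 1$ gives
\[
  p(a,b,c,d) \le \frac{1}{\cos((x+y)/2)} < \frac{1}{\sin(\alpha/2)},
\]
which establishes the corollary.

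The main step is the trigonometric simplification yielding the compact formula $p = \cos((x-y)/2)/\cos((x+y)/2)$ in terms of the three viewing angles $\theta_b,\theta_c,\theta_d$; once this identity is in hand, the stated estimate is immediate from elementary monotonicity of the cosine on $(0,\pi/2)$.
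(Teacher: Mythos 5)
Your proof is correct, but it takes a genuinely different route from the paper's. The paper disposes of this case in two lines: with $b<c<d$ on the positive real axis, the four points also lie on the boundary of the sector with vertex $b$ and opening $\beta=\measuredangle(a,b,c)\ge\alpha$, so the already-proved vertex case (Theorem \ref{thm:sectorb=0}) gives $p\le 1/\sin(\beta/2)\le 1/\sin(\alpha/2)$. You instead bypass that reduction entirely and compute $p$ in closed form: your law-of-sines identities $|a-r|=s\sin\alpha/\sin\theta_r$ and $r-r'=s\sin\alpha\sin(\theta_{r'}-\theta_r)/(\sin\theta_r\sin\theta_{r'})$ check out, the common factor does cancel, and the resulting formula
\[
  p(a,b,c,d)=\frac{\sin x+\sin y}{\sin(x+y)}=\frac{\cos\frac{x-y}{2}}{\cos\frac{x+y}{2}},\qquad x=\theta_b-\theta_c,\ y=\theta_c-\theta_d,
\]
together with $0<x+y=\theta_b-\theta_d<\pi-\alpha$, immediately yields the bound; the degenerate case $\alpha=\pi$ is correctly handled by the collinear Ptolemy identity. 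What your approach buys is self-containedness (no dependence on Lemmas \ref{lem:sector1}--\ref{lem:sector2} or the M\"obius argument behind Theorem \ref{thm:sectorb=0}) and extra information: an exact expression for $p$, a strict inequality for $\alpha<\pi$, and a transparent description of how the supremum is approached (namely $x=y$ with $\theta_b-\theta_d\to\pi-\alpha$, i.e.\ $b\to 0$, $d\to\infty$). The cost is a longer computation where the paper's argument is a one-step reduction to an existing theorem.
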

\begin{proof}
  We may assume that the points $b$, $c$ and $d$ are on the positive real axis and $b < c < d$. Denote $\beta = \measuredangle(a,b,c)$. Now $\beta \ge \alpha$ and points $a$, $b$, $c$ and $d$ are on the boundary of an angluar domain with angle $\beta$. By Proposition \ref{prop:sectorb=0} we have
  \[
    p(a,b,c,d) \le 1/\sin \tfrac{\beta}{2} \le 1/\sin \tfrac{\alpha}{2},
  \]
  where the second inequality follows as the function $f(x)=1/\sin \tfrac{\alpha}{2}$ is decreasing on $(0,\pi]$.
\end{proof}

Next we consider the angular domain in the case when there are exactly two points on each sides of the domain.

\begin{prop}\label{prop:sector2and2}
  Let $\alpha \in (0,\pi]$ and $a,b,c,d \in \partial S_\alpha$ be such points that two of them is on one side of $S_\alpha$ and the other two are on the other side. Then
  \[
    p(a,b,c,d) \le \frac{1}{\sin \frac{\alpha}{2}}.
  \]
\end{prop}
\begin{proof}
  We may assume $|b| \le |c|$. Let the circle through points $b$, $c$ and $d$ be $C$. Denote the intersection of the real axis and the tangent of $C$ at the point $b$ by $u$.
  
  We prove first that the angle $\gamma = \measuredangle(a,b,u) > \alpha$. By Proposition \ref{prop:circumcenter} the circumcenter of $C$ is
  \[
    k = b-\frac{(d-b)(c-b)}{b-\overline{b}}.
  \]
  By a straigthforward computation we obtain
  \[
    u = b+\frac{(d-b)(c-b)}{c+d-(b-\overline{b})} = \frac{cd-|b|^2}{c+d-2 \Re b}
  \]
  and
  \[
    \Re u - \Re b = \frac{\Re(d-b)\Re(c-b)-\Im(b)^2}{\Re(c+b)+\Re(d-b)} < \Re(c-b)
  \]
  implying $u<c$. On the other hand, $|b| \le c<d$ implies $u>0$ and thus $0<u<c$. If we denote $\beta = \measuredangle(0,u,b)$, then $\pi-(\alpha+\beta) = \pi-\gamma$ and thus $\gamma = \alpha+\beta \in (\alpha,\pi]$.
  
  If $a$ is contained inside $C$ then the points $a$, $b$, $c$ and $d$ can be mapped with a M\"obius transformation to an angular domain with angle $\gamma$ and angular point at $b$. By Proposition \ref{prop:sectorb=0} we obtain $p(a,b,c,d) = 1/\sin \tfrac{\gamma}{2} \le 1/\sin \tfrac{\alpha}{2}$.
  
  If $a$ is not contained inside $C$ then we consider circle $C'$ through points $c$, $d$ and $a$. The line through origin and $a$ intersects $C'$ at $a$ and $b'$. Since $b$ is inside $C'$ we have $|b'| \le |b| \le c$. We denote the angle between the line through origin and $a$, and the tangent of $C'$ at $a$ by $\gamma'$. Similarly as we obtained $\gamma > \alpha$, we now have $\gamma' > \alpha$ and by a M\"obius transformation and Proposition \ref{prop:sectorb=0} as above we collect  $p(a,b,c,d) = 1/\sin \tfrac{\gamma}{2} \le 1/\sin \tfrac{\alpha}{2}$.
\end{proof}

By combining Proposition~\ref{prop:sectorb=0}, Corollary~\ref{cor:sector1and3} and Proposition~\ref{prop:sector2and2} we obtain the Ptolemy constant in angular domain:

\begin{prop}\label{prop:angulardomain}
  For $\alpha \in (0,\pi]$
  \[
    P(S_\alpha) = \frac{1}{\sin \frac{\alpha}{2}}. 
  \]
\end{prop}

The result for the angular domain can easily be generalized for double angular domains.

\begin{thm}\label{thm:doubleangluardomain}
  Let $\alpha,\beta \in (0,\pi)$ with $\alpha+\beta \ge \pi$ and $S_{\alpha,\beta}$ the double angular domain (see \eqref{eqn:double angular domain}). Then
  \[
    P(S_{\alpha,\beta}) = \frac{1}{\sin \frac{ \min \{ \alpha,\beta,\alpha+\beta-\pi \} }{2}}. 
  \]
\end{thm}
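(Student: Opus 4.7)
The key conceptual step is to view $S_{\alpha,\beta}$ as a ``generalised triangle'' on the Riemann sphere with three vertices and interior angles $\alpha$, $\beta$, and $\alpha+\beta-\pi$: the first two are the finite vertices of the domain, while the third is the vertex at infinity where the two unbounded boundary rays meet. The hypothesis $\alpha+\beta\ge\pi$ is precisely what makes this third angle non-negative, and parallel rays correspond to $\alpha+\beta=\pi$. Once this correspondence is in place, the claimed formula $P(S_{\alpha,\beta})=1/\sin(\theta_{\min}/2)$ with $\theta_{\min}=\min\{\alpha,\beta,\alpha+\beta-\pi\}$ is the exact analogue of Theorem \ref{main1}.

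\textbf{Lower bound.} For each of the three vertices I would exhibit a sequence of four-point configurations with $p(a,b,c,d)\to 1/\sin(\theta/2)$, where $\theta$ is the interior angle at that vertex. At each finite vertex the boundary of $S_{\alpha,\beta}$ locally coincides with the angular domain $S_\theta$, so a neighbourhood of the vertex supports exactly the extremal configuration used in the proof of Theorem \ref{main1}. For the vertex at infinity one places two of the points on one unbounded ray and the remaining two on the other, all at comparable distances $R\to\infty$; since $p$ is invariant under the similarity $z\mapsto z/R$, rescaling collapses the finite segment $[v_1,v_2]$ to a point and turns the configuration into one inside an angular sector of opening $\alpha+\beta-\pi$, producing the sharp limit $1/\sin((\alpha+\beta-\pi)/2)$.

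\textbf{Upper bound.} Using the distance-distortion identity for a M\"obius map $T(z)=(Az+B)/(Cz+D)$, one checks that each of the two terms defining $p(a,b,c,d)$ is the modulus of a cross-ratio, so $p$ is M\"obius invariant. Applying a M\"obius transformation that moves the point at infinity to a finite point converts $S_{\alpha,\beta}$ into a bounded curvilinear triangle with the same three interior angles $\alpha$, $\beta$, $\alpha+\beta-\pi$, since M\"obius maps are conformal and send generalised circles to generalised circles. One then runs the argument of Theorem \ref{main1}: an extremal four-point configuration must degenerate toward a single vertex of the triangle, and a local expansion near that vertex bounds $p(a,b,c,d)$ by $1/\sin(\theta/2)$ because only the opening angle, not the curvature of the incident boundary arcs, determines the leading behaviour. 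Taking the supremum over the three vertices gives the desired upper bound.

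\textbf{Main obstacle.} The delicate step is the vertex-at-infinity analysis. For the lower bound one has to verify that the rescaled configuration actually attains the sharp constant $1/\sin((\alpha+\beta-\pi)/2)$ and not a strictly smaller value caused by the collapsing segment $[v_1,v_2]$. For the upper bound one has to check that the curvilinear-triangle version of Theorem \ref{main1} is no worse than the straight-sided one, that is, that circular-arc sides do not allow a larger Ptolemy value than line segments meeting with the same vertex angles.
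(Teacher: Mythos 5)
Your lower bound is sound and essentially matches the paper's: the paper also treats the three ``vertices'' of $S_{\alpha,\beta}$ separately, obtaining $1/\sin\frac{\alpha}{2}$ and $1/\sin\frac{\beta}{2}$ from the two finite corners via Theorem \ref{thm:angulardomain}, and $1/\sin\frac{\alpha+\beta-\pi}{2}$ from the vertex at infinity by sending it to a finite point with a M\"obius map; your rescaling argument achieves the same thing and is fine, since $p$ is scale-invariant and the collapsing segment between the two finite vertices causes no loss in the limit.

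The upper bound, however, has a genuine gap, and it is exactly the step you flag as the ``main obstacle''. First, there is a circularity: you propose to ``run the argument of Theorem \ref{main1}'' on the curvilinear triangle obtained after moving $\infty$ to a finite point, but in the paper the logical order is the reverse --- Theorem \ref{main1} is deduced (via Lemma \ref{lem:kolmio_ympyra}) from Theorem \ref{thm:doubleangluardomain}, so there is no independent ``triangle argument'' available to invoke here. Second, and more substantively, the claim that an extremal four-point configuration must degenerate toward a single vertex is precisely what needs to be proved; a local expansion near a vertex only controls degenerating families and says nothing about a quadruple whose points are spread over all three boundary pieces. What is required is a uniform bound on $p(a,b,c,d)$ for \emph{every} admissible quadruple. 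The paper gets this by never forming a curvilinear triangle at all: the chain Theorem \ref{thm:sectorb=0}, Corollary \ref{cor:sector1and3}, Theorem \ref{thm:sector2and2} first establishes $P(S_\theta)=1/\sin\frac{\theta}{2}$ for all distributions of the four points over the two sides of an angular domain, and the double-angular case is then split according to whether one of the three boundary pieces $s$, $t$, $u$ is unoccupied (so the four points already lie on $\partial S_\alpha$, $\partial S_\beta$ or $\partial S_\gamma$) or all three are occupied (in which case one compares with an angular domain of opening $\gamma'\ge\gamma=\alpha+\beta-\pi$). Without some such global reduction --- in particular, without resolving your own worry about whether circular-arc sides can beat straight ones --- your upper bound does not close.
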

\begin{proof}
  Boundary $\partial S_{\alpha,\beta}$ consists of a line segment $s$ and two half-lines $t$ and $u$. Let us denote the angular domain that contains $s$ and $t$ on its boundary by $S_\alpha$, the angular domain that contains $s$ and $u$ on its boundary by $S_\beta$ and the angular domain that contains $t$ and $u$ on its boundary by $S_\gamma$. Note that here $\gamma = \alpha+\beta-\pi$ and for each angular domain $S_j$, the subindex $j$ describes the size of the angle.
  
  By considering domains $S_\alpha$ and $S_\beta$ it is clear that by Proposition \ref{prop:angulardomain}
  \[
    P(S_{\alpha,\beta}) \ge \frac{1}{\sin \frac{ \min \{ \alpha,\beta \} }{2}}. 
  \]
  If we map the angular point of $S_\gamma$ to $\infty$ with a M\"obius transformation $m$, then $S_{\alpha,\beta}$ maps to a bounded domain with boundary consisting two line segments and a circular arc. As the angle between the line segments is $\gamma$ we obtain
  \[
    P(S_{\alpha,\beta}) \ge \frac{1}{\sin \frac{ \gamma }{2}}. 
  \]
  
  Let us prove that  
  \[
    P(S_{\alpha,\beta}) \le \frac{1}{\sin \frac{ \min \{ \alpha,\beta,\alpha+\beta-\pi \} }{2}}. 
  \]
  If $s$, $t$ or $u$ does not contain any of the points $a$, $b$, $c$ or $d$, then the points are contained on the boundary of $S_\alpha$, $S_\beta$ or $S_\gamma$ and the assertion follows from Proposition \ref{prop:angulardomain}. Now each of $s$, $t$ and $u$ contains at least one point and if we consider the angular domain $S_{\gamma'}$ that contains all of the points $a$, $b$, $c$ or $d$ on its boundary, we see that $\gamma' \ge \gamma$. Again the assertion follows from Proposition \ref{prop:angulardomain}
\end{proof}

We finally extend the above results to triangles by the following lemma.

\begin{lem}\label{lem:kolmio_ympyra}
  Let $T$ be a triangle with angles $\alpha$, $\beta$, $\gamma$ and $a,b,c,d \in \partial T$. Then the points $a$, $b$, $c$ and $d$ can be mapped in same order with a M\"obius transformation to $\partial S_\theta$, where $\theta \ge \min \{ \alpha,\beta,\gamma \}$ and $\theta \le \pi$.
\end{lem}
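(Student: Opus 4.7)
The plan is to split into cases according to the distribution of the four points $a,b,c,d$ among the three sides of $T$. Label the vertices of $T$ as $V_\alpha, V_\beta, V_\gamma$ with respective angles $\alpha, \beta, \gamma$. If all four points lie on at most two sides of $T$, they already lie on the boundary of a natural angular domain: four collinear points lie on $\partial S_\pi$, while points on two adjacent sides lie on $\partial S_\phi$ based at the common vertex $V_\phi$ of angle $\phi \in \{\alpha, \beta, \gamma\}$. The identity map (a M\"obius transformation) then sends the four points in the correct cyclic order to $\partial S_\theta$ with $\theta = \pi$ or $\theta = \phi \ge \min\{\alpha, \beta, \gamma\}$, and the claim follows.

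The main case is when the four points lie on all three sides, forcing exactly one side to contain two of them. Without loss of generality let $V_\alpha V_\beta$ contain $a, b$ with $a$ closer to $V_\alpha$, and let $c \in V_\beta V_\gamma$ and $d \in V_\gamma V_\alpha$. Since $T$ is convex, the inscribed quadrilateral $(a,b,c,d)$ is convex, so Theorem~\ref{thm:simplequadrilateral} provides a M\"obius transformation sending the points in order to $\partial S_\theta$ with $\theta = \min\{\angle_a + \angle_c,\, \angle_b + \angle_d\} \in (0,\pi]$, where $\angle_a,\angle_b,\angle_c,\angle_d$ denote the interior angles of the quadrilateral.

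To bound $\theta$ from below I will decompose $T$ into the quadrilateral $abcd$ and the three corner triangles $V_\alpha a d$, $V_\beta b c$, and $V_\gamma c d$. Write $\alpha_a, \alpha_d$ for the angles at $a, d$ in $V_\alpha a d$ (so $\alpha_a + \alpha_d = \pi - \alpha$), and similarly $\beta_b, \beta_c$ and $\gamma_c, \gamma_d$ for the other two corner triangles. Because $a$ and $b$ lie on the straight segment $V_\alpha V_\beta$, the rays from $a$ toward $V_\alpha$ and toward $b$ are opposite, forcing $\angle_a = \pi - \alpha_a$; symmetrically $\angle_b = \pi - \beta_b$. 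Hence $\angle_a \ge \alpha$ and $\angle_b \ge \beta$, while convexity of the inscribed quadrilateral gives $\angle_c, \angle_d > 0$. Consequently $\angle_a + \angle_c > \alpha$ and $\angle_b + \angle_d > \beta$, so $\theta \ge \min\{\alpha, \beta\} \ge \min\{\alpha, \beta, \gamma\}$. The two remaining sub-configurations, in which the side carrying two points is $V_\beta V_\gamma$ or $V_\gamma V_\alpha$, follow by the same argument after permuting vertex roles.

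The main bookkeeping obstacle is correctly identifying the opposite pairs of the inscribed quadrilateral and confirming that a strong lower bound ($\ge \alpha$, $\ge \beta$, or $\ge \gamma$) enters each opposite-pair sum; the crucial observation is that the two quadrilateral vertices lying on the side containing two points receive lower bounds coming from the triangle angles at the two endpoints of that side. Degenerate configurations where a point coincides with a vertex of $T$ can be absorbed into either incident side, or handled by continuity. The upper bound $\theta \le \pi$ is provided directly by Theorem~\ref{thm:simplequadrilateral}.
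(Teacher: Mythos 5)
Your proof is correct, but it follows a genuinely different route from the paper's. The paper handles the three-sides case by an auxiliary circle construction: it takes the circle $C$ through the two vertices of the side carrying two of the points and through one of the remaining points (choosing whichever of the two candidate circles encloses the other point), shows via the exterior position of the third vertex that the angles between $C$ and the two triangle sides it meets exceed the corresponding triangle angles, and then sends the configuration by a M\"obius map into a double angular domain $S_{\gamma',\alpha''}$, finishing with Theorem~\ref{thm:doubleangluardomain}. You instead apply Theorem~\ref{thm:convex_polygon_Stheta}/\ref{thm:simplequadrilateral} directly to the inscribed convex quadrilateral $abcd$ and bound $\theta=\min\{\angle_a+\angle_c,\angle_b+\angle_d\}$ by pure angle chasing: since $a$ and $b$ are interior to the side $V_\alpha V_\beta$, the quadrilateral angles there are supplements of angles in the corner triangles $V_\alpha ad$ and $V_\beta bc$, giving $\angle_a\ge\alpha$ and $\angle_b\ge\beta$, hence $\theta\ge\min\{\alpha,\beta\}\ge\min\{\alpha,\beta,\gamma\}$, while $\theta\le\pi$ is automatic from the angle sum $2\pi$. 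Your argument is more economical — it bypasses the circle construction and the double angular domain machinery entirely and lands directly on a single $S_\theta$, which is in fact closer to the literal statement of the lemma than the paper's own proof (which ends in $S_{\gamma',\alpha''}$ and only implicitly returns to $S_\theta$). The only care points, which you correctly flag, are verifying convexity and cyclic order of the inscribed quadrilateral (immediate since the four points sit in order on the boundary of a convex set, with no three collinear outside the two-sides case) and absorbing vertex-coincidence degeneracies into the two-sides case.
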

\begin{proof}
  Let $a$, $b$, $c$ and $d$ be on $\partial T$. If the points lie on two sides of the triangle then the assertion follows from Theorem \ref{thm:doubleangluardomain}. Thus we assume that at least one of the points $a$, $b$, $c$ and $d$ is on each side of the triangle.
  
  We may assume the triangle $T$ to have vertices 0, 1 and $A$. We denote the angles $\alpha$, $\beta$ and $\gamma$ as in Figure \ref{fig:kolmio-ymp} and we may assume that $b,c \in [0,1]$ and $a$, $b$, $c$ and $d$ are located counterclockwise.
  \begin{figure}[!ht]
    \includegraphics[width=.8\linewidth]{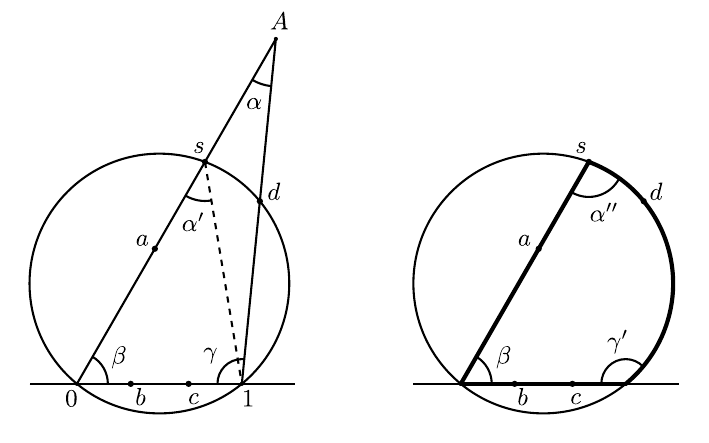}
    \caption{Proof of Lemma \ref{lem:kolmio_ympyra}.}\label{fig:kolmio-ymp}
  \end{figure}
  
  Now the circle through $a$, $0$ and $1$ contains $d$ or the circle through $0$, $1$ and $d$ contains $a$. We may assume that the circle $C$ through $0$, $1$ and $d$ encircles $a$ as the other case is symmetric.
  
  We denote ${s} = (0,A) \cap \partial C$ and note that $a \in (0,s]$. Since $A$ is outside $C$ we have $\alpha' = \measuredangle(0,s,1) > \alpha$. Denote the angle between $C$ and $[0,A]$ at $s$ by $\alpha''$ and the angle between $C$ and $[0,1]$ at $1$ by $\gamma'$. Now $\alpha''=\alpha'+\beta > \alpha$ and $\gamma' > \gamma$. By a M\"obius transformation that takes $(0,1,s)$ to $(\infty,0,1)$ the points $a$, $b$, $c$ and $d$ are mapped in this order to $\partial S_{\gamma',\alpha''}$ and the assertion follows from Theorem \ref{thm:doubleangluardomain}.
\end{proof}

Finally, we can give a proof for Proposition \ref{main1}. Let $a,b,c,d \in \partial T$. By Lemma \ref{lem:kolmio_ympyra} we can map $a$, $b$, $c$ and $d$ in this order by a M\"obius transformation to $\partial S_\theta$, where $\theta = \min \{ \alpha,\beta,\alpha+\beta-\pi \}$, and Proposition \ref{main1} follows from Proposition \ref{prop:angulardomain}.

\section{Other domains}

We consider the Ptolemy constant for quadrilaterals, ellipses and convex plane domains. We begin with quadrilaterals.

\begin{prop}\label{prop:convexquadrilateral}
  Let $(a,b,c,d)$ be a convex quadrilateral with two opposite angles $\alpha$ and $\gamma$. Then
  \[
    p(a,b,c,d) \le \frac{1}{\sin \frac{\alpha+\gamma}{2}}.
  \]
\end{prop}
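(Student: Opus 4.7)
The plan is to reduce the statement to a direct application of Theorem~\ref{thm:simplequadrilateral} together with Theorem~\ref{thm:angulardomain}. First I would invoke Theorem~\ref{thm:simplequadrilateral}: since $(a,b,c,d)$ is a simple quadrilateral with inner angles $\alpha, \beta, \gamma, \delta$, there is a Möbius transformation taking it to $\partial S_\theta$ with
\[
  \theta = \min\{\alpha + \gamma,\, \beta + \delta\} \in (0,\pi],
\]
and moreover $p(a,b,c,d) \le P(S_\theta)$. Then Theorem~\ref{thm:angulardomain} gives $P(S_\theta) = 1/\sin(\theta/2)$.

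The next step is to convert $1/\sin(\theta/2)$ back to the form $1/\sin((\alpha+\gamma)/2)$ stated in the proposition. Since the quadrilateral is convex, $\alpha + \beta + \gamma + \delta = 2\pi$, hence $\beta + \delta = 2\pi - (\alpha + \gamma)$. I would then split into two cases. If $\alpha + \gamma \le \pi$, then $\theta = \alpha + \gamma$ and the result is immediate. If $\alpha + \gamma > \pi$, then $\theta = \beta + \delta = 2\pi - (\alpha + \gamma)$, and
\[
  \sin\frac{\theta}{2} = \sin\!\left(\pi - \frac{\alpha+\gamma}{2}\right) = \sin\frac{\alpha+\gamma}{2},
\]
so the bound $1/\sin(\theta/2) = 1/\sin((\alpha+\gamma)/2)$ again holds.

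There is no real obstacle: both cases lead to the same expression, so the conclusion $p(a,b,c,d) \le 1/\sin((\alpha+\gamma)/2)$ follows. The only thing worth double-checking is that $\theta \in (0,\pi]$ is guaranteed in both cases (it is, since $\min\{\alpha+\gamma, 2\pi-(\alpha+\gamma)\} \le \pi$ automatically, and strict positivity is ensured by the quadrilateral being non-degenerate), so Theorem~\ref{thm:angulardomain} applies without issue.
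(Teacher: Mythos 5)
Your proof is correct and follows essentially the same route as the paper: reduce to the angular domain via the M\"obius mapping of Theorem~\ref{thm:simplequadrilateral}, apply Theorem~\ref{thm:angulardomain}, and use $\beta+\delta=2\pi-(\alpha+\gamma)$ so that $\sin(\theta/2)=\sin((\alpha+\gamma)/2)$ in either case. The paper compresses this into the identity $\sin\frac{\beta+\delta}{2}=\sin\frac{\alpha+\gamma}{2}$ and cites Theorem~\ref{thm:quadrilateral} rather than Theorem~\ref{thm:simplequadrilateral}, but the underlying argument is the same.
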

\begin{proof}
  If we denote the two other angles of $(a,b,c,d)$ by $\beta$ and $\delta$, then
  \[
    \sin\frac{\beta+\delta}{2} = \sin \frac{\alpha+\gamma}{2}
  \]
  and the assertion follows from Proposition~\ref{prop:angulardomain} and Proposition~\ref{prop:quadrilateral}.
\end{proof}

\begin{lem}\label{lem:convexquadrilateral}
  Let $(x,a,b,y)$ be a convex quadrilateral and $z$ be a point on the polyline $xaby$. Then the angle $\gamma = \measuredangle(yzx)$ obtains its smallest value at $z=a$ or $z=b$.
\end{lem}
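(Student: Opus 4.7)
The plan is to analyse $f(z) := \measuredangle(yzx)$ separately on each of the three subsegments $[x,a]$, $[a,b]$, $[b,y]$ of the polyline, showing that on each of them the minimum is attained at an endpoint lying in $\{a,b\}$.

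On the segment $[x,a]$: for $z \in (x,a]$ the ray from $x$ through $z$ coincides with the ray $xa$, so the angle of the triangle $xzy$ at $x$ is the fixed quantity $\measuredangle(axy)$. The angle sum in $xzy$ then gives $f(z) = \pi - \measuredangle(axy) - \measuredangle(xyz)$. Since $(x,a,b,y)$ is convex, the open segment $(x,a)$ lies strictly on one side of the line $xy$ and does not meet $y$, so as $z$ moves from $x$ toward $a$ the ray $yz$ rotates monotonically. Thus $\measuredangle(xyz)$ increases strictly from $0$ to $\measuredangle(xya)$, so $f$ is strictly decreasing on $[x,a]$ and attains its minimum at $z=a$. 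The segment $[b,y]$ is treated symmetrically with the roles of $x$ and $y$ swapped, yielding the minimum at $z=b$.

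The segment $[a,b]$ is the main obstacle, because neither $x$ nor $y$ lies on the line through $a$ and $b$ and so the angle sum trick does not directly apply. Here I would invoke the inscribed angle theorem: for fixed $x,y$, the level sets $\{z : f(z) = \theta\}$ consist of circular arcs through $x$ and $y$, and $f(z)$ increases as $z$ approaches the chord $xy$ on a given side. Restrict to the line $\ell$ through $a$ and $b$, which by convexity of the quadrilateral lies on one side of line $xy$ and avoids $\{x,y\}$. Any critical point of $f|_\ell$ must be a point of tangency between $\ell$ and a level arc, and at such a tangency $f|_\ell$ has a strict local maximum, since perturbing $z$ along $\ell$ yields a circle through $x,y,z$ of strictly larger radius and hence strictly smaller inscribed angle. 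Therefore $f|_\ell$ has no interior local minimum on $[a,b]$, and its minimum over the closed segment is attained at $z=a$ or $z=b$.

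Combining the three cases, the minimum of $f$ over the entire polyline is attained at $z=a$ or $z=b$, as claimed. The only delicate step is the unimodality argument on $[a,b]$, which could alternatively be checked by a direct computation along $\ell$; the inscribed angle approach is conceptually cleaner and matches the style of the earlier lemmas such as Lemma~\ref{lem:visualanglemetricH2}.
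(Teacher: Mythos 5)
Your proof is correct and follows essentially the same route as the paper: split the polyline into the three segments, treat $[a,b]$ via the pencil of circles through $x$ and $y$ meeting the line $\ell(a,b)$ (the paper invokes Lemma~\ref{lem:visualanglemetricH2} for exactly this), and dispose of the side segments separately. If anything, your version is slightly more careful: the tangency-is-a-local-maximum argument correctly establishes that $f$ has no interior minimum on $[a,b]$ (whereas the paper's literal claim that $\gamma$ increases monotonically away from $k=\ell(a,b)\cap\ell(x,y)$ fails beyond the tangency point, though the endpoint conclusion survives), and you supply the angle-sum monotonicity argument on $[x,a]$ and $[b,y]$ where the paper simply declares the claim clear from a figure.
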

\begin{proof}
  Let us first assume that $z \in [a,b]$. Denote the line through points $a$ and $b$ by $\ell(a,b)$, and the line through points $x$ and $y$ by $\ell(x,y)$. Denote the intersection of $\ell(a,b)$ and $\ell(x,y)$ by $k$. Since $(x,a,b,y)$ is convex $k \notin [x,y]$. By Lemma \ref{lem:visualanglemetricH2}, $\gamma$ obtains its minimal value at $k$ and it is clear that $\gamma$ increases as $z$ is moved further away from $k$ along the line $\ell(a,b)$, see Figure \ref{fig:kmax_knelik}. Thus the assertion follows.
  \begin{figure}[!ht]
    \includegraphics[width=.9 \linewidth]{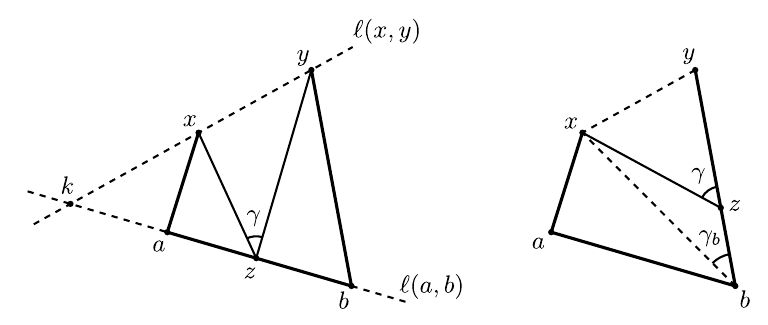}
    \caption{Proof of Lemma \ref{lem:convexquadrilateral}. The cases $z \in [a,b]$ (on left) and $z \in [b,y]$ (on right).}
    \label{fig:kmax_knelik}
  \end{figure}
  
  If $z \in [a,x]$ or $z \in [b,y]$ the assertion is clear, see Figure \ref{fig:kmax_knelik}.
\end{proof}

\begin{cor}\label{cor:convexpolygon}
  Let $(x,a_1,a_2,\dots ,a_n,y)$, $n \ge 1$, be a convex polygon and $z$ be a point on the polyline $x a_1 a_2\cdots a_ny$. Then the angle $\gamma = \measuredangle(yzx)$ obtains its smallest value at $z=a_i$ for some $i \in \{ 1,2,\dots ,n \}$.
\end{cor}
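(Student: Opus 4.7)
The plan is to decompose the polyline and argue segment by segment. Setting $a_0 = x$ and $a_{n+1} = y$, the polyline is the union of the $n+1$ closed segments $[a_i, a_{i+1}]$ for $i = 0, 1, \ldots, n$. It is enough to show that on each such segment the minimum of $\measuredangle(y z x)$ is attained at one of the interior vertices $a_1, \ldots, a_n$; the overall minimum over the polyline then equals $\min_{1 \le i \le n} \measuredangle(y a_i x)$, which is the desired assertion.

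For an interior segment $[a_i, a_{i+1}]$ with $1 \le i \le n - 1$ I invoke Lemma \ref{lem:convexquadrilateral}. Because the four points $x, a_i, a_{i+1}, y$ appear in this cyclic order on the boundary of the convex polygon $(x, a_1, \ldots, a_n, y)$, they span a convex quadrilateral. Applying Lemma \ref{lem:convexquadrilateral} to $(x, a_i, a_{i+1}, y)$ along its polyline $x a_i a_{i+1} y$ gives $\measuredangle(y z x) \ge \min\{\measuredangle(y a_i x), \measuredangle(y a_{i+1} x)\}$ for every $z$ on that polyline, and in particular for every $z \in [a_i, a_{i+1}]$; since the right hand side is attained at one of the endpoints of this subsegment, this case is finished.

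The end segments $[x, a_1]$ and $[a_n, y]$ are the main obstacle, because on them $z$ becomes collinear with one of $x, y$ and Lemma \ref{lem:convexquadrilateral} degenerates. I handle $[x, a_1]$ by working in the triangle $x z y$: for every $z \in (x, a_1]$ the point $z$ lies on the ray from $x$ through $a_1$, so the triangle's angle at $x$ is constantly $\measuredangle(y x a_1)$; meanwhile, as $z$ slides monotonically from $x$ to $a_1$ along the segment the ray $y z$ rotates monotonically, so the triangle's angle at $y$ is strictly increasing. The triangle angle sum then forces $\measuredangle(y z x)$ to be strictly decreasing along $[x, a_1]$, so its minimum on that segment is attained at $z = a_1$. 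The segment $[a_n, y]$ is handled symmetrically with minimum at $z = a_n$, and combining the three cases gives the conclusion.
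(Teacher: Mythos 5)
Your proposal is correct. The paper states this corollary without proof, treating it as an immediate consequence of Lemma \ref{lem:convexquadrilateral}, and your segment-by-segment reduction --- applying that lemma to the convex sub-quadrilaterals $(x,a_i,a_{i+1},y)$ for the interior edges and disposing of the two end segments by the monotonicity of the angle at $z$ forced by the angle sum of the triangle $xzy$ --- is precisely the routine argument the paper leaves implicit.
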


Our next two results give lower and uppers for the Ptolemy constant in a parallelogram.

\begin{prop}\label{prop:parallellogram_lower}
  Let $G$ be a parallelogram with smallest angle $\alpha$ and sides $r$ and $s$. Then
  \[
    P(G) \ge \max \left\{ \frac{1}{\sin \frac{\alpha}{2}} , \sqrt{1+ \left( \frac{\max \{ r,s \} }{2 \min \{ r,s \} \sin \alpha} \right)^2 }, \frac{1}{2} \left( f(r,s,\alpha) + \frac{1}{f(r,s,\alpha)} \right) \right\},
  \]
  where
  \[
    f(r,s,\alpha) = \frac{\sqrt{ r^2+2rs \cos \alpha + s^2 }}{\min \{r,s\} \sin \alpha}.
  \]
\end{prop}
\begin{proof}
  By Proposition \ref{prop:angulardomain} it is clear that
  \[
    P(G) \ge \frac{1}{\sin \frac{\alpha}{2}}.
  \]
  
  We may assume $r \ge s$. Let $A$, $B$, $C$ and $D$ be the vertices of $G$ and let $a,b,c,d \in \partial G$.
  
  We prove first that
  \[
    P(G) \ge \frac{1}{2} \left( f(r,s,\alpha) + \frac{1}{f(r,s,\alpha)} \right).
  \]
  Since $r \ge s$, we have $(r+s\cos \alpha)/2 \ge s \cos \alpha$. We choose $b=B$, $d=D$ and points $a$ and $c$ in a way that they divide the whole length of $G$, that is $r+s \cos \alpha$, into half (see left-hand side of Figure \ref{fig:ksuunnikas}).
  \begin{figure}[!ht]
    \includegraphics[width=\linewidth]{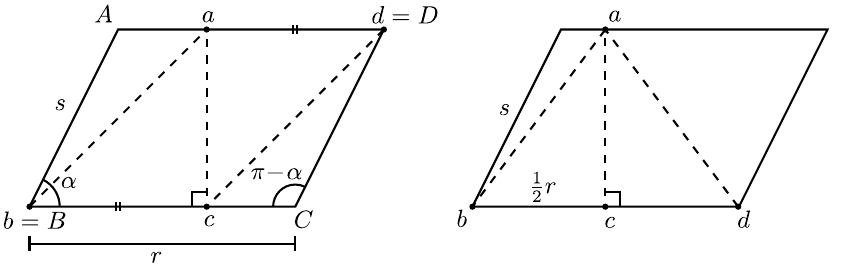}
    \caption{Proof of Proposition \ref{prop:parallellogram_lower}.}\label{fig:ksuunnikas}
  \end{figure}
  
  Now $|a-d|=|b-c|=(r+s \cos \alpha)/2$, $|a-c|=s \sin \alpha$,
  $|a-b|=|c-d|=\sqrt{s^2 \sin^2 \alpha +|a-d|^2}$ and $|b-d|=\sqrt{r^2+s^2+2rs\cos\alpha}$. Now
  \begin{eqnarray*}
    p(a,b,c,d) & = & \frac{1}{2}\frac{(r^2+2rs\cos\alpha+s^2\cos^2\alpha)+2s^2\sin^2\alpha}{s \sqrt{r^2+s^2+2rs\cos\alpha} \sin\alpha}\\
    & = & \frac{1}{2} \left( f(r,s,\alpha) + \frac{1}{f(r,s,\alpha)} \right).
  \end{eqnarray*}
  
  Finally, we show that
  \[
    P(G) \ge \sqrt{1+ \left( \frac{r}{2 r \sin \alpha} \right)^2 }.
  \]
  If $r < 2s \cos \alpha$, then
  \[
    \sqrt{1+\left( \frac{r}{2s \sin \alpha} \right)^2} < \sqrt{\frac{1}{1+\tan^2 \alpha}}=\frac{1}{\sin \alpha}
  \]
  and thus
  \[
    P(G) > \frac{1}{2} \frac{\sqrt{r^2+2rs \cos \alpha+s^2}}{s \sin \alpha} > \frac{1}{2} \frac{\sqrt{3r^2+s^2}}{s \sin \alpha} \ge \frac{1}{\sin \alpha} > \sqrt{1+\left( \frac{r}{2s \sin \alpha} \right)^2}.
  \]
  We assume $r \ge 2s \cos \alpha$. We choose $b=A$, $c=(A+B)/2$, $d=C$ and $a$ is the intersection point of the side $[A,D]$ and the perpendicular bisector of $[B,C]$ (see right-hand side of Figure \ref{fig:ksuunnikas}). Now
  \[
    p(a,b,c,d) = \sqrt{1+ \left( \frac{r}{2 r \sin \alpha} \right)^2 }
  \]
  and the assertion follows.
\end{proof}

\begin{prop}\label{prop:Pforparallelogram}
  Let $G$ be a parallelogram with smallest angle $\omega$ and sides $r$ and $s$. Then
  \[
    P(G) \le \frac{\sqrt{ r^2+2rs \cos \omega + s^2 }}{\min \{r,s\} \sin \omega}.
  \]
\end{prop}
\begin{proof}
  We may assume $s \le r$. Let $a,b,c,d \in \partial G$ be points in this order. We denote the inner angles of the parallelogram $(a,b,c,d)$ by $\alpha$, $\beta$, $\gamma$ and $\delta$, respectively. We may assume $\alpha + \gamma \ge \pi$.
  
  If $a$ and $c$ lie on the same side of $G$ so does $b$ and $d$, because $\beta + \delta\le \pi$. In this case $p(a,b,c,d)=1$.
  
  Let us assume that $a$ and $c$ lie on adjacent sides of $G$. Now $\beta+\delta \ge \omega$ and by Proposition \ref{prop:convexquadrilateral}
  \begin{eqnarray*}
    p(a,b,c,d) & \le & \frac{1}{\sin \frac{\beta+\delta}{2}} \le \frac{1}{\sin \frac{\omega}{2}} = \frac{2 \cos \frac{\omega}{2}}{\sin \omega} \\
    & \le & \frac{\sqrt{2+2\cos \omega}}{\sin \omega} \le \frac{\sqrt{ r^2+2rs \cos \omega + s^2 }}{\min \{r,s\} \sin \omega}.
  \end{eqnarray*}
  
  Let us finally assume that $a$ and $c$ lie on opposite sides of $G$. By Corollary \ref{cor:convexpolygon} we may assume that $b$ and $d$ are vertices of $G$. As above we know by Proposition \ref{prop:convexquadrilateral} that
  \begin{equation}\label{eqn:convexpolygon}
    p(a,b,c,d) \le \frac{1}{\sin \frac{\beta+\delta}{2}}.
  \end{equation}
  
  If $b$ and $d$ are opposite vertices then $\beta, \delta \ge \theta$, where $\theta$ is the angle between the diagonal and a side of $G$. We may assume that $\theta$ is the smaller of the two possible angles. Now $(\beta+\gamma)/2 \ge \theta$ implying
  \[
    \sin \frac{\beta+\delta}{2} \ge \sin \theta = \frac{s \sin \omega}{\sqrt{(r+s \cos \omega)^2+s^2 \sin^2 \omega}} = \frac{s \sin \omega}{\sqrt{r^2+2 r s \cos \omega+ s^2}}
  \]
  and the assertion follows from \eqref{eqn:convexpolygon}.
  
  If $b$ and $d$ are adjacent vertices, then we may assume that $c \in [b,d]$. By Lemma \ref{lem:visualanglemetricH2} the largest possible value for $p(a,b,c,d)$ is attained for $a$, which is on the perpendicular bisector $p$ of points $b$ and $d$. Even if $p$ does not intersect the side of $G$ that is opposite to $[b,d]$, we can still use the estimate
  \[
    \sin \frac{\beta+\delta}{2} \ge \frac{s \sin \omega}{\sqrt{\left(  \frac{r}{2} \right)^2 +s^2 \sin^2 \omega}} \ge \frac{s \sin \omega}{\sqrt{r^2+s^2}} \ge \frac{s \sin \omega}{\sqrt{r^2+2 r s \cos \omega+ s^2}}
  \]
  and the assertion follows from \eqref{eqn:convexpolygon}.  
\end{proof}
  
  \begin{figure}[!ht]
    \includegraphics[width=\linewidth]{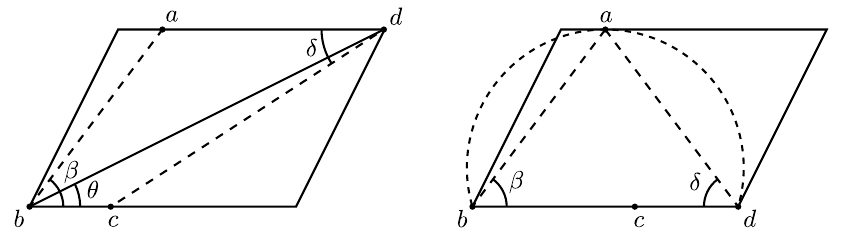}
    \caption{Proof of Proposition \ref{prop:Pforparallelogram}. The points $a$ and $c$ are on the opposite sides. The points $b$ and $d$ are on opposite vertices (on left) and on adjacent vertices (on right).}
    \label{fig:suunnikas_yla}
  \end{figure}

\begin{thm}\label{thm:P(G) for parallelogram}
  Let $G$ be a parallelogram with smallest inner angle $\alpha \in (0,\tfrac{\pi}{2}]$ and sides $r$ and $s$. Then
  \[
    \frac{1}{2} \left( A + \frac{1}{A} \right) \le P(G) \le A = \frac{\sqrt{r^2+2rs\cos \alpha+ s^2}}{\min\{ r,s \} \sin \alpha}.
  \]
\end{thm}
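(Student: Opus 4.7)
The plan is to handle the two inequalities separately: the lower bound by exhibiting a specific inscribed parallelogram, and the upper bound by a case analysis.

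For the lower bound I will construct four points on $\partial G$ that themselves form a parallelogram inscribed in $G$ whose diagonals equal the longer diagonal of $G$ and the minimum width of $G$. Label the vertices of $G$ by $A,B,C,D$ so that $\alpha$ lies at $A$ (and at $C$), let $M$ denote the centre of $G$, and assume without loss of generality $r\le s$. Set $a:=A$, $c:=C$, let $b$ be the foot of the perpendicular from $M$ onto side $BC$, and set $d:=2M-b$. A short calculation shows that $b$ lies in the interior of segment $BC$ (since $s\ge r\ge r\cos\alpha$), and the central symmetry $x\mapsto 2M-x$ of $G$ makes $d$ the corresponding foot of perpendicular onto $AD$; hence $a,b,c,d$ traverse $\partial G$ in positive order. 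Since $a+c=b+d=2M$, the quadrilateral $abcd$ is itself a parallelogram, and its diagonals are $|a-c|=\sqrt{r^2+s^2+2rs\cos\alpha}$ (the longer diagonal of $G$) and $|b-d|=2\,|M-b|=r\sin\alpha$, giving $|a-c|/|b-d|=A$. From $|a-b|=|c-d|$, $|a-d|=|b-c|$ and the parallelogram identity $|a-b|^2+|a-d|^2=\tfrac12(|a-c|^2+|b-d|^2)$,
\[
 p(a,b,c,d)=\frac{|a-b|^2+|a-d|^2}{|a-c|\,|b-d|}=\frac{|a-c|^2+|b-d|^2}{2\,|a-c|\,|b-d|}=\tfrac12(A+1/A).
\]

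For the upper bound I plan to prove $|a-b|\,|c-d|+|a-d|\,|b-c|\le A\,|a-c|\,|b-d|$ for every cyclic 4-tuple $a,b,c,d\in\partial G$. Parametrising each of the four points by which side of $G$ contains it and by a position along that side turns $p$ into an explicit function of four real parameters together with a discrete side-assignment; the central symmetry of $G$ together with the symmetry $(a,b,c,d)\mapsto(c,d,a,b)$ of $p$ cut the number of side-assignments down to a short list. In each case I would either locate an interior critical point of $p$ via the stationarity conditions, or else reduce to a lower-dimensional boundary face where some of the four points coincide with vertices of $G$, and iterate.

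The hard part will be the case analysis itself, and in particular the limiting configurations where three of the four points collapse to a single vertex of $G$: there both sides of the target inequality vanish to the same order, and the ratio must be evaluated by expanding around the vertex with the law of cosines. This produces the limit $1/\sin(\alpha/2)$ at an acute-angle vertex and $1/\cos(\alpha/2)$ at an obtuse one; verifying that the former is at most $A$ reduces to $\sqrt{r^2+s^2+2rs\cos\alpha}\ge 2r\cos(\alpha/2)$, which follows from the identity $r^2+s^2+2rs\cos\alpha=(r-s)^2+4rs\cos^2(\alpha/2)$ together with the assumption $r\le s$. The remaining non-collapsing cases appear to involve only explicit extremum calculations, which is where I expect the bulk of the technical work to go.
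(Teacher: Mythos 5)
Your lower bound is correct and is essentially the paper's own construction: the paper also takes the centrally symmetric inscribed quadrilateral whose diagonals are the long diagonal of $G$ and the minimal width $\min\{r,s\}\sin\alpha$ (with the roles of the pairs $(a,c)$ and $(b,d)$ interchanged relative to yours), and computes $p=\tfrac12(A+1/A)$; your use of the parallelogram law in place of the explicit length computation is a pleasant shortcut, and your checks on the ordering of the points and on the foot of the perpendicular landing inside the side are the right ones.

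The upper bound, however, is not proved: everything after ``Parametrising each of the four points\dots'' is a plan, and the content you defer --- the enumeration of side-assignments and the ``explicit extremum calculations'' in each case --- is precisely where all the difficulty of the theorem sits. Two concrete problems with the plan as stated. First, the supremum of $p$ is typically \emph{not} attained at an interior critical point or at a configuration of distinct points on a closed face; it is approached only along degenerate limits (e.g.\ three points collapsing to a vertex, as you note, but also other collapses), so a critical-point/boundary-face induction needs a careful compactification of the configuration space, which you do not supply, and your analysis of degenerate limits covers only one of the several collapse patterns. Second, a direct four-parameter optimization of $p$ on a parallelogram is genuinely intractable by hand in the non-collapsing cases (e.g.\ $a,c$ on opposite long sides and $b,d$ on the short sides). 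The paper avoids this entirely with a M\"obius-invariance reduction: any simple quadrilateral $(a,b,c,d)$ with angles $\alpha,\beta,\gamma,\delta$ can be mapped to the boundary of an angular domain $S_{\theta}$ with $\theta=\min\{\alpha+\gamma,\beta+\delta\}$ (Theorem~\ref{thm:simplequadrilateral}), whence $p(a,b,c,d)\le 1/\sin\frac{\beta+\delta}{2}$ (Proposition~\ref{prop:convexquadrilateral}); Corollary~\ref{cor:convexpolygon} then lets one push $b$ and $d$ to vertices of $G$, and the whole problem reduces to elementary lower bounds on the angle sum $\beta+\delta$ in three easy cases ($a,c$ on the same, adjacent, or opposite sides). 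Your observation that the vertex-collapse limits $1/\sin(\alpha/2)$ and $1/\cos(\alpha/2)$ are at most $A$ (via $r^2+s^2+2rs\cos\alpha=(r-s)^2+4rs\cos^2(\alpha/2)$) is correct and is in effect the ``adjacent sides'' case of the paper's argument, but to complete the proof you need either to import a reduction of this kind or to actually carry out and write down the optimization you only sketch.
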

\begin{proof}
  Follows from Propositions \ref{prop:parallellogram_lower} and \ref{prop:Pforparallelogram}.
\end{proof}

We collect two corollaries as special cases of Theorem \ref{thm:P(G) for parallelogram}.

\begin{cor}
  If $G$ is a rhombus (a parallelogram with r=s) with smallest angle $\alpha \in (0,\tfrac{\pi}{2}]$, then
  \[
    P(G) = \frac{1}{\sin \frac{\alpha}{2}}.
  \]
\end{cor}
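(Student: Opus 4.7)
The plan is to derive the claimed equality directly from the bounds already established in Theorem \ref{thm:P(G) for parallelogram} (and Theorem \ref{thm:parallellogram_lower}), by showing that for a rhombus both bounds collapse to the same value. I would begin by substituting $r = s$ into the upper bound $A$ of Theorem \ref{thm:P(G) for parallelogram} and simplifying:
\[
A = \frac{\sqrt{r^2 + 2r^2 \cos\alpha + r^2}}{r \sin\alpha} = \frac{\sqrt{2(1 + \cos\alpha)}}{\sin\alpha}.
\]
Applying the half-angle identities $1 + \cos\alpha = 2\cos^2(\alpha/2)$ and $\sin\alpha = 2\sin(\alpha/2)\cos(\alpha/2)$ (noting $\cos(\alpha/2) > 0$ since $\alpha/2 \in (0, \pi/4]$), this reduces to $A = 1/\sin(\alpha/2)$. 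This immediately yields the upper bound $P(G) \le 1/\sin(\alpha/2)$.

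For the lower bound, note that the estimate $\frac{1}{2}(A + 1/A)$ appearing in Theorem \ref{thm:P(G) for parallelogram} is strictly weaker than $A$ whenever $A > 1$, so it alone does not close the gap. However, Theorem \ref{thm:parallellogram_lower} gives the sharper lower bound
\[
P(G) \ge \max\Bigl\{ \tfrac{1}{\sin(\alpha/2)},\, \ldots \Bigr\} \ge \tfrac{1}{\sin(\alpha/2)},
\]
which is exactly what is needed. Combining these two bounds yields $P(G) = 1/\sin(\alpha/2)$.

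The only non-routine step is the trigonometric collapse $A \to 1/\sin(\alpha/2)$ in the rhombus case; everything else is invocation of the theorems proved just above. I do not anticipate a substantive obstacle, since both directions are already packaged in the preceding results.
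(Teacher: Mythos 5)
Your proposal is correct and follows essentially the same route as the paper: the upper bound comes from the parallelogram upper bound with $r=s$, which simplifies via $\sqrt{2+2\cos\alpha}/\sin\alpha = 1/\sin\frac{\alpha}{2}$, and the matching lower bound is the $1/\sin\frac{\alpha}{2}$ term in Theorem~\ref{thm:parallellogram_lower}. Your observation that the weaker lower bound $\tfrac{1}{2}(A+1/A)$ stated in Theorem~\ref{thm:P(G) for parallelogram} would not suffice, and that one must invoke the sharper bound from Theorem~\ref{thm:parallellogram_lower}, is exactly what the paper does.
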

\begin{proof}
  By Proposition \ref{prop:Pforparallelogram}
  \[
    P(G) \le \frac{\sqrt{2+2 \cos \alpha}}{\sin \alpha} = \frac{1}{\sin \frac{\alpha}{2}}
  \]
  and by Proposition \ref{prop:parallellogram_lower}
  \[
    P(G) \ge \max \left\{ \frac{1}{\sin \frac{\alpha}{2}} , \sqrt{1+ \left( \frac{1}{2 \sin \alpha} \right)^2 }, \frac{1}{2} \left( \sin \frac{\alpha}{2} + \frac{1}{\sin \frac{\alpha}{2}} \right) \right\},
  \]
  so the assertion follows.
\end{proof}

\begin{cor}
  If $R$ is a rectangle with sides $r$ and $s$, then
    \[
      \max \left\{ \sqrt{2},\sqrt{1+\frac{\max \{ r,s \}^2 }{4 \min \{ r,s \}^2 }} \right\} \le P(R) \le \sqrt{1+\frac{\max \{ r,s \}^2 }{\min \{ r,s \}^2 }}.
    \]
\end{cor}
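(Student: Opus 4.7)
The plan is to specialize Theorems \ref{thm:parallellogram_lower} and \ref{thm:Pforparallelogram} to the rectangle case, where the smallest inner angle is $\omega = \pi/2$, so $\sin\omega = 1$, $\cos\omega = 0$, and $\sin(\omega/2) = 1/\sqrt{2}$. Essentially the whole proof is a substitution; no new geometric reasoning is needed.

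For the upper bound I would invoke Theorem \ref{thm:Pforparallelogram}. The expression $\sqrt{r^2+2rs\cos\omega+s^2}/(\min\{r,s\}\sin\omega)$ collapses to $\sqrt{r^2+s^2}/\min\{r,s\}$, and pulling $\min\{r,s\}^2$ inside the radical gives
\[
  P(R) \le \sqrt{1+\frac{\max\{r,s\}^2}{\min\{r,s\}^2}},
\]
which is precisely the claimed upper bound.

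For the lower bound I would apply Theorem \ref{thm:parallellogram_lower}. With $\omega = \pi/2$, the first quantity in the displayed maximum becomes $1/\sin(\pi/4) = \sqrt{2}$, and the second becomes $\sqrt{1+\max\{r,s\}^2/(4\min\{r,s\}^2)}$. These are already exactly the two terms appearing in the claim. Since $P(R)$ is bounded below by the maximum of the three quantities in Theorem \ref{thm:parallellogram_lower}, it is in particular bounded below by the maximum of any two of them, so the third term $\tfrac{1}{2}(f + 1/f)$ can simply be dropped from the $\max$. This yields the claimed lower bound.

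There is no real obstacle here: the content of the corollary is entirely contained in Theorems \ref{thm:parallellogram_lower} and \ref{thm:Pforparallelogram}, and the only nontrivial observation is the trivial one that $\max\{x,y,z\} \ge \max\{x,y\}$, so we are free to discard the $\tfrac{1}{2}(f+1/f)$ term without any comparison between the three candidates.
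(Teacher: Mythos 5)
Your proof is correct and is exactly the paper's argument: the paper's proof of this corollary is simply ``Follows from Theorems \ref{thm:parallellogram_lower} and \ref{thm:Pforparallelogram},'' and your substitution $\omega=\pi/2$ (so $\sin\omega=1$, $\cos\omega=0$, $\sin(\omega/2)=1/\sqrt{2}$) together with discarding the third term of the $\max$ is the intended specialization.
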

\begin{proof}
  Follows from Propositions \ref{prop:parallellogram_lower} and \ref{prop:Pforparallelogram}.
\end{proof}

Our final goal is to study the Ptolemy constant in ellipses. First we introduce a more general result for convex domains.

\begin{prop}\label{prop:convexcurve}
  Let $J$ be a convex curve with parametrisation $\gamma \colon [0,1] \to \C$. If $\alpha = |\ang \gamma(0)-\ang \gamma(1)| < \pi$, then
  \[
    P(J) \le \frac{1}{\sin \frac{\pi-\alpha}{2}}.
  \]
\end{prop}
\begin{proof}
  The angle between the tangents of $J$ at the points $\gamma(0)$ and $\gamma(1)$ is at least $\pi-\alpha$, see Figure \ref{fig:konv_kaari}.
  
  Let $a,b,c,d \in [0,1]$ be such that $a<b<c<d$. Let $k$ be the intersection of the tangents of $J$ at points $\gamma(a)$ and $\gamma(d)$ and $\beta$ the angle between the tangents. Now $\beta \ge \pi-\alpha$. Let us denote the angle between lines through points $\gamma(a)$, $\gamma(b)$ and $\gamma(d),k$ by $\delta \ge \beta$. Finally, we denote the angle between lines through points $\gamma(a)$, $\gamma(b)$ and $\gamma(c),\gamma(d)$ by $\varphi$. Now $\varphi \ge \delta \ge \pi-\alpha$ and the assertion follows from Proposition \ref{prop:angulardomain}.
\end{proof}
  
  \begin{figure}[!ht]
    \includegraphics[width=\linewidth]{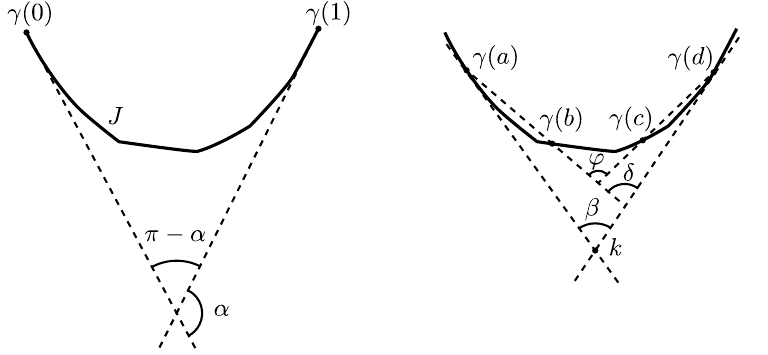}
    \caption{Proof of Proposition \ref{prop:convexcurve}.}
    \label{fig:konv_kaari}
  \end{figure}

\begin{thm}\label{thm:P(G) for ellipse}
  Let $E$ be an ellipse with semiaxis $a$ and $b$. Then
  \[
    \frac{1}{2} \left( \frac{a}{b}+\frac{b}{a} \right) \le P(E) \le \frac{1}{\sin \frac{b \pi}{2a}} \le \frac{2}{\pi} \left( \frac{a}{b}+\frac{b}{a} \right).
  \]
\end{thm}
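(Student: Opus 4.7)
For the lower bound $\tfrac{1}{2}(a/b + b/a) \le P(E)$, I test the four vertices of $E$, namely $A = (a, 0)$, $B = (0, b)$, $C = (-a, 0)$, $D = (0, -b)$, listed in positive cyclic order on $\partial E$. A direct computation gives $|AB| = |BC| = |CD| = |DA| = \sqrt{a^2+b^2}$, $|AC| = 2a$ and $|BD| = 2b$, so
\[
p(A,B,C,D) = \frac{2(a^2+b^2)}{4ab} = \frac{1}{2}\!\left(\frac{a}{b} + \frac{b}{a}\right).
\]

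For the middle bound $P(E) \le 1/\sin(b\pi/(2a))$, I parametrize $\partial E$ by $\theta \mapsto (a\cos\theta, b\sin\theta)$ and, via the sum-to-product identities, express the chord length between the points with parameters $\theta_i, \theta_j$ as
\[
2\bigl|\sin\tfrac{\theta_i - \theta_j}{2}\bigr| \, r\!\left(\tfrac{\theta_i + \theta_j}{2}\right), \qquad r(s) := \sqrt{a^2 \sin^2 s + b^2 \cos^2 s} \in [b, a],
\]
with $r$ attaining its minimum $b$ at $s \equiv 0 \pmod{\pi}$ and maximum $a$ at $s \equiv \pi/2 \pmod{\pi}$. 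For four ordered parameters $\theta_1 < \theta_2 < \theta_3 < \theta_4$, substitution yields a quotient of sine products weighted by six values of $r$ evaluated at the midpoints $(\theta_i + \theta_j)/2$. The plan is to (i) use the reflection symmetries of the ellipse together with cyclic relabelling to reduce to configurations in which the smallest parameter gap $\sigma$ satisfies $\sigma \le \pi/2$ and sits on a prescribed arc; (ii) use Ptolemy's identity on the inscribed and circumscribed circles (radii $b$ and $a$) to cancel common sine factors between numerator and denominator; and (iii) close with a one-variable sine inequality in $\sigma$ whose extremum evaluates to $1/\sin(b\pi/(2a))$.

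For the last inequality $1/\sin(b\pi/(2a)) \le \tfrac{2}{\pi}(a/b + b/a)$, substituting $t = b\pi/(2a) \in (0, \pi/2]$ reduces it to the elementary one-variable inequality $\sin t \ge \pi^2 t/(\pi^2 + 4t^2)$. Both sides vanish at $t = 0$ with matching derivative $1$; at $t = \pi/2$ the left-hand side is $1 > \pi/4$; and non-negativity of the difference on $(0, \pi/2]$ follows by a direct sign analysis of its derivative.

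\textbf{Main obstacle.} The critical step is the middle bound. A naive pointwise application of $r \in [b, a]$ in numerator and denominator only yields $P(E) \le (a/b)^2$, which is much weaker than $1/\sin(b\pi/(2a))$ when $b/a$ is small. The sharper estimate requires capturing the correlations among the six values of $r$ occurring in the Ptolemy quotient together with the constraint $\sum_i(\theta_{i+1} - \theta_i) = 2\pi$ on the parameter gaps, so that the large and small values of $r$ cannot all be assigned to the numerator simultaneously.
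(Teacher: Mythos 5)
Your lower bound is exactly the paper's (the four vertices give $p = \tfrac12(a/b+b/a)$), and your reduction of the last inequality to $\sin t \ge \pi^2 t/(\pi^2+4t^2)$ for $t = b\pi/(2a) \in (0,\pi/2]$ is a correct algebraic restatement of what the paper proves (the paper establishes it by showing $c \mapsto 2/\bigl(\sin(c\pi/2)(c+1/c)\bigr)$ is decreasing with limit $4/\pi$ at $0^+$, using the estimate $x\cot x \ge 1-4x^2/\pi^2$); your sketch of that step is only a sketch, but it is routine calculus and I would accept it.

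The middle bound, however, is a genuine gap: you have written a plan, not a proof, and the plan does not contain the idea that makes the estimate work. Your steps (i)--(iii) never explain how the six correlated values of $r$ at the chord midpoints, together with the constraint on the parameter gaps, produce the specific constant $1/\sin(b\pi/(2a))$; you yourself note that the naive pointwise bound only gives $(a/b)^2$, and the proposed ``one-variable sine inequality in $\sigma$'' is not identified, so there is nothing to check. The paper's mechanism is entirely different and is worth knowing: given four points on $\partial E$ in cyclic order, apply the affine map $(x,y)\mapsto(x,\tfrac{a}{b}y)$ (equivalently, compare $E$ with the circle of radius $b$ under horizontal scaling by $c=b/a$); on the circle the quadrilateral is cyclic, so its opposite angle sums both equal $\pi$, and Lemma \ref{lem:kompleksilukutulos} ($\arg(x+icy) > c\arg(x+iy)$) shows that the compression can shrink each opposite-angle sum by a factor of at most $c$, so on the ellipse $\min\{\alpha+\gamma,\beta+\delta\} \ge c\pi$. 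Theorem \ref{thm:simplequadrilateral} then Möbius-maps the quadrilateral onto $\partial S_\theta$ with $\theta = \min\{\alpha+\gamma,\beta+\delta\}$, and Theorem \ref{thm:angulardomain} gives $p(a,b,c,d) \le P(S_\theta) = 1/\sin(\theta/2) \le 1/\sin(c\pi/2)$. Note how the constant arises as half of a minimal opposite-angle sum fed into the angular-domain formula — not from the chord-length parametrization — which is why it is unlikely your reduction to a one-variable inequality in the smallest gap $\sigma$ would ever surface $\sin(b\pi/(2a))$. To repair your write-up you would either need to carry out steps (i)--(iii) in full (and I do not see how), or switch to the affine-comparison argument.
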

\begin{proof}
  If $a=b$, the claim is clear. We assume $b<a$ and that the semiaxes lie on the real and the imaginary axes. Now
  \[
    p(a,bi,-a,-bi) = \frac{2 \cdot (\sqrt{a^2+b^2})^2}{2a \cdot 2b} = \frac{1}{2} \left( \frac{a}{b} + \frac{b}{a} \right) \le P(E).
  \]
  
  For the upper bound of $P(E)$ we consider scaling $E$ to circle $C$ with center at origin and radius $b$. The scaling is horizontal with scaling factor $c=b/a$. Four points on $C$ form a convex quadrilateral and we denote the angles by $\alpha$, $\beta$, $\gamma$ and $\delta$. Now $\alpha + \gamma = \pi = \beta + \delta$.
  
  Each angle has two sides and when scaling $E$ to $C$ the angle $\eta$ between a side and a horizontal line changes. Lemma~\ref{lem:kompleksilukutulos} gives a lower bound for the change of $\eta$. In the case when the scaling causes maximal decrease in $\alpha+\gamma$, we obtain for new scaled angles $\alpha'$ and $\beta'$ that $\alpha' + \beta' \in (c\pi,\pi]$. Now the upper bound for $P(E)$ follows from Lemma~\ref{lem:simplequadrilateral} and Proposition~\ref{prop:angulardomain}.

  To prove the last inequality we show that for $c \in (0,1]$,
  \[
  \frac{1}{ \sin \left( c \cdot \pi / 2 \right) } \; < \;
  \frac{4}{\pi} \cdot \frac{1}{2} \left( c + \frac{1}{c} \right),
  \]
  which is equivalent to $f(c) < 4/\pi$ for
  \[
    f(c) \; = \; \frac{1}{ \sin ( c \pi / 2 ) } \; / \; \frac{1}{2} \left( c + \frac{1}{c} \right) \; = \; \frac{2}{\sin ( c \pi / 2 ) \cdot (c + 1/c) }.
  \]
  Now
  \[
    f'(c) = \frac{-2}{(1+c^2)^2 \sin ( c \pi / 2 ) } \cdot \left( (1+c^2) \cdot (c \pi /2) \cot (c \pi /2) - (1-c^2) \right).
  \]
  By \cite{Bec78} for $x \in (0, \pi / 2]$
  \[
    x \cot x \geq 1 - \frac{4x^2}{\pi^2}
  \]
  and thus $(c \pi /2) \cot (c \pi /2) \geq (1-c^2)$ implying
  \begin{equation}\label{equ_ellipFunk}
  f'(c) \leq \frac{-2}{(1+c^2)^2 \sin ( c \pi / 2 ) } \cdot (1 - c^2) ( 1 + c^2 - 1 ) \leq 0.
\end{equation}
  Since
  \[
  \lim\limits_{c \rightarrow 0+} f(x) \; = \; \lim\limits_{c \rightarrow 0+} \frac{4}{\pi} \cdot {\left( (c^2 + 1) \sin ( c \pi / 2 ) / ( c \pi / 2 ) \right)}^{-1} \; = \; \frac{4}{\pi},
  \]
  the assertion follows.
\end{proof}

\section{Uniformity}

In this section we derive new estimates for the uniformity constant. To consider the uniformity constant we often need to estimate the quasihyperbolic distance, because explicit formula for it is known for very few simple domains. One of these is the complement of the origin. Martin and Osgood proved \cite[p. 38]{MarOsg86} that for all $x,y \in \Rn \setminus \{ 0 \}$ 
  \begin{equation}\label{martin-osgood formula}
    k_{\Rn \setminus \{ 0 \}}(x,y) = \sqrt{\measuredangle(x,0,y)^2+\left( \log \frac{|x|}{|y|} \right)^2 },
  \end{equation}
  where $\measuredangle(x,0,y)$ is the angle between line segments $[0,x]$ and $[0,y]$.
  
  In the following example we consider the uniformity constant of a circular arc. We show that in this case there is no connection between the Ptolemy constant and the uniformity constant.

\begin{example}\label{example:no connection}
  Let us consider domain $D$ in $\C$, whose boundary consists of an arc of the unit circle. Then $P(D) = 1$ and $A_D$ depends on the length of the $\partial D$ and $A_D$ increases as the length of $\partial D$ increases. For $a \in (0,\pi/2)$ we define
  \[
    \partial D = \{ z \in \C \colon z=e^{i t}, \, t \in [a,2\pi] \}.
  \]
  We derive a lower bound $l=l(a)$ for $A_D$ in terms of $a$ and show that $l(a) \to \infty$ as $a \to 0$.
  
  We fix points $x,y \in D$ to be $x=0$ and $y=2$. Now $d_D(x)=d_D(y)=1$ and $j(x,y) = \log 3$.
  
  Denote $z=e^{i ta}$ and $u=(1+z)/2$. We estimate
  \[
    k_D(x,y) \ge k_D(x,[z,1])+k_D([z,1],[1,\infty)),
  \]
  where $k_D(x,[z,1])$ denotes the quasihyperbolic distance from point $x$ to line segment $[z,1]$ and $k_D([z,1],[1,\infty))$ denotes the quasihyperbolic distance from line segment $[z,1]$ to the set $[1,\infty) = \{ (t,0) \in \C \colon t \ge 1 \}$. By \cite[Remark, 4.26]{Kle08} and \cite[Lemma 2.2]{KleHar15} we can calculate
  \begin{eqnarray*}
    k_D(x,[z,1]) = k_G(x,u) & = & \log \frac{2 \left( |u-x|+\sqrt{\frac{|z-1|^2}{4}+|u-x|^2} \right)}{|1-z|}\\
    & = & \log \frac{1+\cos \frac{a}{2}}{\sin \frac{a}{2}},
  \end{eqnarray*}
  since $|u-x| = |u| = \cos (a/2)$ and $|1-z| = 2 \sin (a/2)$. By \eqref{martin-osgood formula}
  \begin{eqnarray*}
    k_D([z,1],[1,\infty)) & \ge & k_{\C \setminus \{ 1 \}}(u,y) = \sqrt{\measuredangle(u,1,y)^2+\left( \log \frac{|u-1|}{|y-1|} \right)^2 }\\
    & \ge & \measuredangle(u,1,y) = \frac{\pi}{2}+\frac{a}{2}.
  \end{eqnarray*}
    
    Putting the estimates together we obtain
  \begin{eqnarray*}
    A_D \ge \frac{k_D(x,y)}{j_D(x,y)} \ge \frac{\log \frac{1+\cos \frac{a}{2}}{\sin \frac{a}{2}}+\frac{\pi}{2}+\frac{a}{2}}{\log 3} \to \infty
  \end{eqnarray*}
  as $a \to 0$.
\end{example}

We introduce the following exact result \eqref{eq:Linden} for the angular domain and build up results to obtain a lower bound for the uniformity constant for convex polygons.

H. Lindén proved \cite{Lin05} that for $\alpha \in (0,\pi]$,
\begin{equation}\label{eq:Linden}
  A_{S_\alpha} = 1+ \frac{1}{\sin \frac{\alpha}{2}}.
\end{equation}

\begin{prop}\label{prop:domain with an angle}
  Let $\alpha \in (0,\pi)$ and $G \subset S_\alpha$ be a domain such that for some $r>0$
  \[
    G \cap B^2(r) = S_\alpha \cap B^2(r).
  \]
  Then $A_G \ge A_{S_\alpha}$.
\end{prop}
\begin{proof}
  Since $S_\alpha$ is uniform, for any $\varepsilon > 0$ there exists points $x_0, y_0 \in S_\alpha$ such that
  \[
    k_{S_\alpha}(x_0, y_0) \geq (A_{S_\alpha} - \varepsilon) j_{S_\alpha}(x_0, y_0).
  \]
  Let us denote $R = 2 \cdot \max \{ |x_0|, |y_0| \}$. Now points $x_0$ and $y_0$ are contained in $S_\alpha \cap \mathbb{B}(0, R)$ and thus points $r/R \cdot x_0$ and $r/R \cdot y_0$ are contained in $G \cap \mathbb{B}(0,r)$. We denote that
\begin{eqnarray*}
  j_G(r/R \cdot x_0, r/R \cdot y_0) & = & \log \left( 1+\frac{|\frac{r}{R}x_0-\frac{r}{R}y_0|}{\min \{ d_{S_\alpha}(\frac{r}{R}x_0),d_{S_\alpha}(\frac{r}{R}y_0) \}} \right)\\
  & = & \log \left( 1+\frac{|x_0-y_0|}{\min \{ d_{S_\alpha}(x_0),d_{S_\alpha}(y_0) \}} \right)\\
  & = & j_{S_\alpha}(x_0, y_0).
\end{eqnarray*}

\begin{figure}[!ht]\centering
\includegraphics[scale=1]{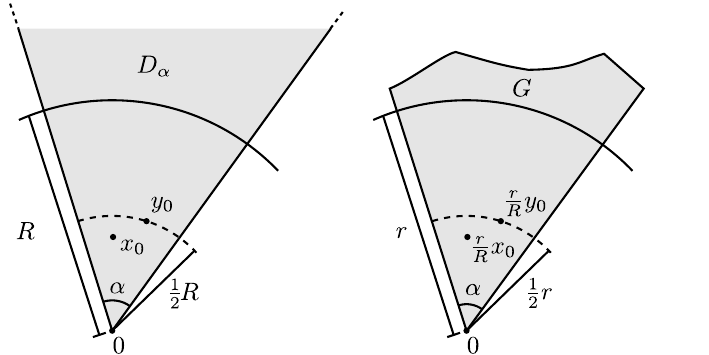}
\caption{Points $x_0$ and $y_0$ in $S_\alpha$ and their images under mapping $z \mapsto (r/R)z$.\label{kuva_kulmaSkaalaus}}
\end{figure}

  Next we show that
  \[
    k_G(r/R \cdot x_0, r/R \cdot y_0) \geq k_{S_\alpha}(x_0, y_0).
  \]
  For any point $z \in G$ or equivalently $(R/r)z \in (R/r)G \subseteq S_\alpha$ we have
  \[
    d(\partial G, z)
    \; = \;
    r/R \cdot d(\partial ((R/r)G), (R/r)z)
    \; \leq \;
    r/R \cdot d(\partial S_\alpha ,(R/r)z).
  \]
  Let $\gamma \subset G$ be a rectifiable path joining $r/R \cdot x_0$ and $r/R \cdot y_0$. Now
  \[
    k_G(r/R \cdot x_0, r/R \cdot y_0)
    \; = \;
    \inf\limits_{\gamma \subset G} \int\limits_\gamma \frac{|dz|}{d(\partial G, z)}
    \; \geq \;
    \inf\limits_{\gamma \subset G} \int\limits_\gamma \frac{|dz|}{r/R \cdot d(\partial S_\alpha ,(R/r)z)}.
  \]
  and further
  \begin{eqnarray*}
    k_G(r/R \cdot x_0, r/R \cdot y_0) & \geq & \inf\limits_{\gamma \subset G} \int\limits_\gamma \frac{|dz|}{r/R \cdot d(\partial S_\alpha ,(R/r)z)}\\
    & = & \inf\limits_{\Gamma \subset (R/r)G} \; \int\limits_{\Gamma} \frac{|dw|}{ d(\partial S_\alpha ,w)}\\
    & \geq &\inf\limits_{\Gamma' \subset S_\alpha} \; \int\limits_{\Gamma'} \frac{|dw|}{ d(\partial S_\alpha ,w)}\\
    & = & k_{S_\alpha}(x_0, y_0),
\end{eqnarray*}
because paths $\Gamma' \subset S_\alpha$ (joining $x_0$ and $y_0$) covers all the paths $\Gamma \subset (R/r)G$.

  By putting all together we obtain
  \begin{eqnarray*}
    k_G(r/R \cdot x_0, r/R \cdot y_0) & \geq & k_{S_\alpha}(x_0, y_0)\\
    & \geq & (A_{S_\alpha} - \varepsilon) j_{S_\alpha}(x_0, y_0)\\
    & = & (A_{S_\alpha} - \varepsilon) j_G(r/R \cdot x_0, r/R \cdot y_0)
  \end{eqnarray*}
  and the assertion follows as we let $\varepsilon \to 0$.
\end{proof}

\begin{thm}\label{thm:A for convex polygon}
  Let $G$ be a convex polygon with smallest inner angle $\alpha \in (0,\pi)$. Then
  \[
    A_G \ge 1+\frac{1}{\sin \frac{\alpha}{2}}.
  \]
\end{thm}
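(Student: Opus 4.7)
\emph{Proof proposal.} My plan is to localise the quotient $k_G/j_G$ at the vertex where the minimal inner angle $\alpha$ is attained and to compare with the value of $A$ on an infinite angular sector $S_\alpha$. The introduction records the equality $A_{S_\alpha} = 1 + P(S_\alpha) = 1 + 1/\sin(\alpha/2)$ for $\alpha \in (0,\pi]$, so the task reduces to establishing the lower bound $A_G \ge A_{S_\alpha}$.

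Let $v$ be a vertex of $G$ where the smallest inner angle $\alpha$ is attained, and let $S$ denote the open angular sector with apex $v$ bounded by the two lines carrying the edges of $G$ at $v$. Convexity of $G$ forces $G \subset S$. Since the remaining edges of $G$ stay at a positive distance from $v$, there is an open neighbourhood $N$ of $v$ with $N \cap \partial G = N \cap \partial S$; hence, for every $x \in N \cap G$, the closest boundary point of $G$ lies on one of the two edges at $v$ and is simultaneously the closest boundary point of $S$, so $d_G(x) = d_S(x)$. This yields $j_G(x,y) = j_S(x,y)$ whenever $x,y \in N \cap G$. In parallel, the inclusion $G \subset S$ together with the standard domain monotonicity of the quasihyperbolic metric (if $U \subset V$, then $d_U \le d_V$ on $U$ and every rectifiable curve in $U$ is one in $V$, so $k_U \ge k_V$) gives $k_G(x,y) \ge k_S(x,y)$ for all $x,y \in G$. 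Combining the two observations,
\[
  \frac{k_G(x,y)}{j_G(x,y)} \;\ge\; \frac{k_S(x,y)}{j_S(x,y)} \qquad \text{for } x,y \in N \cap G.
\]

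Finally I exploit that $S$ is a cone about $v$: the homothety $z \mapsto v + \lambda(z - v)$ maps $S$ onto itself for every $\lambda > 0$ and preserves both $k_S$ and $j_S$ (since $d_S$ is homogeneous of degree one in this dilation). Given any $\tilde x,\tilde y \in S$, shrinking the pair about $v$ by a sufficiently small $\lambda$ places the scaled pair inside $N$ without altering the ratio $k_S/j_S$. Taking the supremum over such pairs and invoking the value of $A_{S_\alpha}$ recalled above yields $A_G \ge 1 + 1/\sin(\alpha/2)$, as required.

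The step that merits most attention is the local identification $d_G = d_S$ near $v$: it rests on the observation that in a convex polygon the only pieces of $\partial G$ meeting a sufficiently small ball about a vertex are the two edges emanating from it, so no other edge of $G$ can supply a closer boundary point. Once this and the monotonicity $k_G \ge k_S$ are secured, the rest of the argument is a routine scaling calculation.
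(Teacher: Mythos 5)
Your proposal is correct and takes essentially the same approach as the paper: the paper packages your argument as a separate statement (Theorem \ref{thm:domain with an angle}), whose proof likewise combines the local coincidence of $\partial G$ with the sector boundary near the vertex (giving equality of the $j$-metrics there), domain monotonicity of the quasihyperbolic metric from $G \subset S_\alpha$, and the scaling invariance of the sector to import near-extremal pairs, before citing Lind\'en's value $A_{S_\alpha} = 1 + 1/\sin\frac{\alpha}{2}$.
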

\begin{proof}
  For each vertex of the polygon $G$ we may use Proposition \ref{prop:domain with an angle} for the inner angle $\alpha_m$. Since $G$ is convex $\alpha_m \in (0,\pi]$. Now $A_G \ge \max_m A_{S_{\alpha_m}}$ and the maximum is obtained for the smallest angle $\alpha_m$.
\end{proof}

Our next goal is to find a lower bound for the uniformity constant in triangle. To obtain it we estimate the quasihyperbolic distance in angular domain and the uniformity constant in cut angular domain $S_\alpha \cap B^2(r)$.

\begin{lem}\label{lem:qh estimate in angluar domain}
  Let $\alpha \in (0,\pi]$ and $x,y \in S_\alpha$ with $|y| \le |x|$. Then
  \[
    k_{S_\alpha}(x,y) \ge \frac{1}{\sin \frac{\alpha}{2}} \ln \frac{|x|}{|y|}.
  \]
\end{lem}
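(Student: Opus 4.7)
The plan is to produce the estimate in two stages: first bound $d_{S_\alpha}(z)$ above by a multiple of $|z|$, then recognize the resulting integral as a standard logarithmic length along any path.

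First I would compute the distance to the boundary explicitly. Writing $z = re^{it} \in S_\alpha$ with $t \in (0,\alpha)$, the boundary of $S_\alpha$ consists of the positive real axis and the ray at argument $\alpha$, whose distances from $z$ are $r\sin t$ and $r\sin(\alpha-t)$ respectively. Hence
\[
d_{S_\alpha}(z) \;=\; |z|\,\min\{\sin t,\sin(\alpha-t)\} \;\le\; |z|\sin\tfrac{\alpha}{2},
\]
since the function $t \mapsto \min\{\sin t,\sin(\alpha-t)\}$ attains its maximum at $t=\alpha/2$ (on $(0,\alpha)$ with $\alpha \in (0,\pi]$).

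Next, given any rectifiable curve $\gamma \subset S_\alpha$ joining $y$ to $x$, the above bound yields
\[
\ell_k(\gamma) \;=\; \int_\gamma \frac{|dz|}{d_{S_\alpha}(z)} \;\ge\; \frac{1}{\sin\frac{\alpha}{2}}\int_\gamma \frac{|dz|}{|z|}.
\]
The integral on the right is bounded below by the log-ratio of the endpoint radii: parametrizing $\gamma$ by $[0,1]$ with $\gamma(0)=y$, $\gamma(1)=x$, one has $\tfrac{d}{dt}\log|\gamma(t)| \le |\gamma'(t)|/|\gamma(t)|$ almost everywhere, so
\[
\int_\gamma \frac{|dz|}{|z|} \;\ge\; \Bigl|\log|x|-\log|y|\Bigr| \;=\; \log\frac{|x|}{|y|}
\]
by the hypothesis $|y|\le|x|$. (Equivalently, this is the trivial lower bound coming from the Martin--Osgood formula \eqref{martin-osgood formula} for $k_{\C\setminus\{0\}}$.)

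Taking the infimum over all admissible $\gamma$ combines the two displays into the claimed inequality. There is no real obstacle here: the only subtle point is the distance computation for $d_{S_\alpha}(z)$, which relies on $\alpha \le \pi$ so that the maximum of $\min\{\sin t,\sin(\alpha-t)\}$ truly occurs at the bisector $t=\alpha/2$ and equals $\sin(\alpha/2)$; the rest is a direct path-length estimate.
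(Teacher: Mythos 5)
Your proof is correct and follows essentially the same route as the paper: bound $d_{S_\alpha}(z)\le |z|\sin\frac{\alpha}{2}$ and then reduce the quasihyperbolic length of any path to the radial logarithmic integral $\int_\gamma |dz|/|z| \ge \log(|x|/|y|)$. You merely spell out the two steps (the explicit distance formula and the derivative bound on $\log|\gamma(t)|$) that the paper compresses into ``$|dz|\ge |dr|$'' and ``$d(\partial S_\alpha,z)\le r\sin(\alpha/2)$''.
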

\begin{proof}
  We denote $z(t) = r(t) e^{i\theta(t)}$ and thus $|dz| \geq |dr|$. Since $d(\partial S_{\alpha}, z) \leq r \sin(\alpha/2)$ we obtain
  \begin{eqnarray*}
    k_{S_{\alpha}}(x,y) & = & \inf\limits_{\gamma} \int\limits_{\gamma} \frac{|dz|}{d(\partial S_{\alpha}, z)} \; \geq \; \int\limits_{|x|}^{|y|} \frac{|dr|}{r \sin(\alpha/2)}\\
    & \geq & \frac{1}{\sin \frac{\alpha}{2}} \, \left| \, \int\limits_{|x|}^{|y|} \frac{dr}{r} \, \right| \; = \; \frac{1}{\sin \frac{\alpha}{2}} \log \frac{|y|}{|x|}
  \end{eqnarray*}
  and the assertion follows.
\end{proof}

\begin{prop}
  Let $\alpha \in (0,\pi)$ and $G \subset \R$ be a domain such that for some $r>0$
  \[
    G \cap B^2(r) = S_\alpha \cap B^2(r).
  \]
  Then $A_G \ge A_{S_\alpha}$.
\end{prop}
\begin{proof}
  The assertion can be proved in a similar way as Proposition \ref{prop:domain with an angle}.
  
  We choose $r$, $x_0$ and $y_0$ as in the proof of Proposition \ref{prop:domain with an angle}. For $R \ge 2 \max \{ |x_0|,|y_0| \}$ we obtain
  \[
    j_G \left( (r/R) x_0, (r/R) y_0 \right) = j_{S_\alpha} (x_0,y_0).
  \]
  
  We estimate next the quasihyperbolic distance between $x_0$ and $y_0$. We denote $c = \frac12 A_{S_\alpha} j_{S_\alpha} (x_0,y_0)$ and set
  \[
    R \ge 2 \max \{ |x_0|,|y_0| \} e^{c \sin (\alpha / 2)}
  \]
  or equivalently
  \[
    \frac{1}{\sin \frac{\alpha}{2}} \log \frac{\frac{1}{2}R}{\max \{ |x_0|,|y_0| \}} \ge c.
  \]
  
  Let $\gamma \subset G$ be a curve joining points $(r/R) x_0$ and $(r/R) y_0$. We denote $(R/r)G$ by $G'$ and the curve $(R/r)\gamma$ by $\gamma'$. Note that $\gamma'$ joins the points $x_0$ and $y_0$.
  
  If $\gamma' \subset B^2 (R/2)$, then for each $z \in \gamma'$ we have
  \[
    d(\partial G',z) = d(\partial S_\alpha,z)
  \]
  and thus
  \[
    k_G((r/R)x_0,(r/R)y_0) = k_{S_\alpha}((r/R x_0),(r/R)y_0).
  \]
  Now the proof continues as in the proof of Proposition \ref{prop:domain with an angle}.
  
  If $\gamma' \not \subset B^2 (R/2)$, then $\gamma'$ goes from $S^1 (\max \{ |x_0|,|y_0| \}$ to $S^1 (R/2)$ and back at least once. Since for every $z \in B^2(R/2)$, we have $D(\partial G',z) = d(\partial S_\alpha,z)$, Lemma \ref{lem:qh estimate in angluar domain} gives
  \begin{eqnarray*}
    k_{G'}(x_0,y_0) & \ge & 2 \frac{1}{\sin \frac{\alpha}{2}}\log \frac{\frac12 R}{\max \{ |x_0|,|y_0| \}} > 2c\\
    & = & A_{S_\alpha} j_{S_\alpha}(x_0,y_0) = A_{S_\alpha} j_{S_\alpha}((r/R)x_0,(r/R)y_0)
  \end{eqnarray*}
  and the assertion follows.
\end{proof}

We estimate the uniformity constant in rhombi and obtain an estimate for rectangles as a special case.

\begin{prop}\label{prop: UC for rhombus}
  If $G$ is a rhombus with smallest angle $\alpha$, then
  \[
    A_G \ge \frac{2}{\sin \frac{\alpha}{2}}.
  \]
\end{prop}
\begin{proof}
  We may choose $G$ so that its vertices are $1$, $ti$, $-1$ and $-ti$ for $t \in(0,1)$. Let $x=(s,0)$ for $s \in [0,1)$ implying $d_G(x) = (1-|x|)\sin(\alpha/2)$. The quasihyperbolic geodesic from $x$ to $-x$ is the line segment $[-x,x]$ and thus
  \[
    k_G(-x,x) = 2\int_0^{|x|} \frac{du}{(1-u)\sin \tfrac\alpha2} = \frac{-2 \log (1-|x|)}{\sin \tfrac\alpha2}.
  \]
  For the distance ratio metric we obtain
  \begin{eqnarray*}
    j_G(-x,x) & = & \log \left( 1+\frac{2|x|}{(1-|x|)\sin \tfrac\alpha2} \right)\\
    & = & \log \left( \sin \tfrac\alpha2+(2-\sin \tfrac\alpha2)|x| \right) - \log \left( (1-|x|)\sin \tfrac\alpha2 \right).
  \end{eqnarray*}
  We denote $u=|x|$ and $C=\sin \tfrac\alpha2$. The l'H\^{o}pital rule gives
  \begin{eqnarray*}
    \frac{k_G(-x,x)}{j_G(-x,x)} & \ge & \lim_{u \to 1} \frac{k_G(-x,x)}{j_G(-x,x)}\\
    & = & \lim_{u \to 1} \frac{2}{C(1-u)} \cdot \frac{1}{\frac{2-C}{C+u(2-C)} + \frac{C}{C(1-u)}}\\
    & = & \lim_{u \to 1} \frac{2}{C} \cdot \frac{C+u(2-C)}{(2-C)(1-u)+C+u(2-C)} = \frac{2}{C}
  \end{eqnarray*}
  and the assertion follows.
\end{proof}


\begin{cor}\label{UC for rectangular}
  For rectangle $R$ with sides of length $a$ and $b\le a$ the uniformity constant is
  \[
    A_R \ge 2 \sqrt{2} \left( \frac{b}{a} \right)^4.
  \]
\end{cor}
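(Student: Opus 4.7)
The plan is to deduce this rectangle estimate from the rhombus bound in Theorem \ref{thm: UC for rhombus} (applied to a square) by comparing the rectangle with a square via an affine bilipschitz map and invoking Lemma \ref{lem:Vuo88 lemma}.

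First I would let $S$ be a square of side length $b$. A square is a rhombus with smallest angle $\alpha = \pi/2$, so Theorem \ref{thm: UC for rhombus} yields
\[
  A_S \ge \frac{2}{\sin(\pi/4)} = 2\sqrt{2}.
\]
Next I would introduce the linear map $f \colon \R \to \R$ defined by $f(x,y) = ((a/b)x, y)$. Then $f(S)$ is a rectangle of sides $a$ and $b$, which after a rigid motion can be identified with $R$; the bilipschitz and uniformity properties are invariant under isometries, so we may work with $f(S)$ in place of $R$.

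A direct computation shows
\[
  (b/a)\,|p-q| \;\le\; |f^{-1}(p)-f^{-1}(q)| \;\le\; |p-q|
\]
for all $p,q \in \R$, since $b/a \le 1$, so $f^{-1}$ is $L$-bilipschitz with $L = a/b \ge 1$. As $R$ is a convex polygon it is uniform (for instance by Theorem \ref{thm:A for convex polygon}). Applying Lemma \ref{lem:Vuo88 lemma} to $f^{-1}$, viewed as a mapping from the uniform domain $R$ onto $S$, gives
\[
  A_S \;\le\; L^4 A_R \;=\; (a/b)^4 A_R.
\]
Rearranging and combining with the rhombus estimate yields
\[
  A_R \;\ge\; (b/a)^4 A_S \;\ge\; 2\sqrt{2}\,(b/a)^4,
\]
which is the claimed bound.

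There is no real obstacle here; the only things to check carefully are the bilipschitz constant of the coordinate rescaling and the fact that the uniformity of $R$ is indeed available before invoking Lemma \ref{lem:Vuo88 lemma}, both of which are straightforward. The bound is expected to be weak when $b/a$ is small, but the statement only asserts a lower bound of order $(b/a)^4$, which is exactly what this bilipschitz transfer produces.
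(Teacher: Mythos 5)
Your proof is correct and follows essentially the same route as the paper: apply Theorem \ref{thm: UC for rhombus} to a square to get $A_S \ge 2\sqrt{2}$, then transfer to the rectangle via the coordinate rescaling and Lemma \ref{lem:Vuo88 lemma} applied in the direction $A_S \le (a/b)^4 A_R$. Your write-up is in fact slightly more careful than the paper's (correct bilipschitz constant for the normalized map, explicit use of $f^{-1}$, and noting that uniformity of $R$ must be known beforehand), though citing Theorem \ref{thm:A for convex polygon} for uniformity is not quite right since that theorem only gives a lower bound; uniformity of a bounded convex domain should be invoked directly.
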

\begin{proof}
  For a square $S$, Proposition \ref{prop: UC for rhombus} gives unformity constant $2\sqrt{2}$. We consider mapping $f(x,y)=(ax,by)$. Now $f$ is $(a/b)$-bilipschitz and $f(S) = R$. By Lemma~\ref{lem:Vuo88 lemma} we obtain $2\sqrt{2} = A_S \le L^4 A_R = (a/b)^4$.
\end{proof}

\begin{rem}
  Corollary \ref{UC for rectangular} improves the lower bound introduced in \cite[5.44]{Lin05}.  
\end{rem}

\begin{thm}\label{UC in triangle}
  If $T$ is a triangle with angle $\alpha$, $\beta$ and $\gamma$ such that $\alpha \le \beta \le \gamma$. Then
  \[
    A_T \ge \frac{1}{\sin \frac{\alpha}{2}} + \frac{1}{\sin \frac{\beta}{2}}.
  \]
\end{thm}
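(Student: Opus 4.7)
The plan is to exhibit pairs of points $x, y \in T$ for which the ratio $k_T(x,y)/j_T(x,y)$ tends to $1/\sin(\alpha/2) + 1/\sin(\beta/2)$, so that the supremum defining $A_T$ is at least this value. Denote by $A, B, C$ the vertices of $T$ carrying the angles $\alpha, \beta, \gamma$ and set $c = |AB|$; since $\alpha \le \beta \le \gamma$ with $\alpha + \beta + \gamma = \pi$, both $\alpha$ and $\beta$ are strictly less than $\pi/2$. For small $\varepsilon > 0$ I place $x$ on the angle bisector at $A$ with $|x - A| = \varepsilon/\sin(\alpha/2)$ and $y$ on the angle bisector at $B$ with $|y - B| = \varepsilon/\sin(\beta/2)$. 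The key geometric input is the pointwise bound
\[
  d_T(z) \le |z-A|\sin(\alpha/2)
\]
valid for every $z \in T$ in a fixed neighborhood of $A$: writing $\phi = \angle BAz \in [0, \alpha]$, the acuteness of $\alpha$ and $\beta$ implies that the perpendicular foot from $z$ onto each side meeting at $A$ lies inside the corresponding segment, so $d_T(z) \le |z-A|\min\{\sin\phi, \sin(\alpha-\phi)\} \le |z-A|\sin(\alpha/2)$, with equality along the bisector. An analogous bound $d_T(z) \le |z-B|\sin(\beta/2)$ holds near $B$.

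Applying the bisector equality to $x$ and $y$ gives $d_T(x) = d_T(y) = \varepsilon$ for $\varepsilon$ small, and since $|x-y| = c + O(\varepsilon)$ we obtain $j_T(x,y) = \log(1 + c/\varepsilon + O(1)) = \log(1/\varepsilon) + O(1)$. To lower-bound $k_T(x,y)$, fix $r_0 > 0$ small enough that the above distance bounds hold on $\overline{B(A,r_0)} \cap T$ and $\overline{B(B,r_0)} \cap T$ and that these two closed balls are disjoint. For any rectifiable curve $\gamma \subset T$ from $x$ to $y$, let $z_1$ be the first exit from $\overline{B(A,r_0)}$ and $z_2$ the last entry into $\overline{B(B,r_0)}$; the corresponding subcurves $\gamma_1$ and $\gamma_3$ are traversed in order and lie in the respective balls. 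Combining the distance bound with the standard polar estimate $\int_{\gamma_1} |dz|/|z-A| \ge \log(|z_1-A|/|x-A|)$ yields
\[
  \ell_k(\gamma_1) \ge \frac{1}{\sin(\alpha/2)} \log \frac{r_0 \sin(\alpha/2)}{\varepsilon} = \frac{\log(1/\varepsilon)}{\sin(\alpha/2)} + O(1),
\]
and analogously for $\gamma_3$. Taking the infimum over $\gamma$ then gives $k_T(x,y) \ge \bigl(1/\sin(\alpha/2) + 1/\sin(\beta/2)\bigr)\log(1/\varepsilon) + O(1)$. Dividing by $j_T(x,y)$ and letting $\varepsilon \to 0$ yields the claim.

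The main thing to verify rigorously is the pointwise distance bound $d_T(z) \le |z-A|\sin(\alpha/2)$ near $A$; the acuteness of $\alpha$ and $\beta$ is exactly what makes the perpendicular feet of $z$ onto the sides at $A$ fall inside the segments, so that the sine-of-half-angle bound is genuinely an upper bound on $d_T$. Once this is in hand, the rest is a routine polar-coordinate lower bound for the quasihyperbolic length, together with a careful split of any competing curve at the boundaries of two disjoint neighborhoods of $A$ and $B$.
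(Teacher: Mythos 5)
Your proposal is correct, and it reaches the bound by a genuinely different (and more elementary) route than the paper, even though the test configuration is the same: both arguments send a point along the bisector at the vertex of angle $\alpha$ and a point along the bisector at the vertex of angle $\beta$ toward those vertices. The paper then analyzes the actual quasihyperbolic geodesic joining $x$ and $y$ — it uses the medial axis to argue the geodesic stays in the subtriangle opposite $\gamma$, decomposes it into a piece of each bisector plus a circular arc, invokes smoothness of quasihyperbolic geodesics to compute the radius of the arc, and then evaluates the quasihyperbolic length along the two bisector pieces exactly. You bypass all of that: the pointwise comparison $d_T(z) \le |z-A|\sin(\alpha/2)$ on a small ball about $A$ (valid because $\alpha,\beta < \pi/2$ forces the perpendicular feet onto the two sides at $A$ to land in the segments), combined with the polar estimate $\int_\gamma |dz|/|z-A| \ge \log\bigl(|z_1-A|/|x-A|\bigr)$, lower-bounds the quasihyperbolic length of \emph{every} competing curve near each vertex, so no information about the geodesic is needed. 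This is essentially the paper's own Lemma~\ref{lem:qh estimate in angluar domain} applied locally at the two vertices; your version is more robust, since it does not depend on the medial-axis and geodesic-structure claims, and the splitting of an arbitrary curve at the first exit from $\overline{B(A,r_0)}$ and the last entry into $\overline{B(B,r_0)}$ (disjoint balls, so the two subarcs are parameter-disjoint and their lengths add) is airtight. The remaining bookkeeping — $d_T(x)=d_T(y)=\varepsilon$ for small $\varepsilon$, $j_T(x,y)=\log(1/\varepsilon)+O(1)$, and the passage to the limit — matches the paper.
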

\begin{proof}
  The medial axis of $T$ consists of subarcs of the bisectors of the triangle $T$ and it divides $T$ into three subtriangles $T_\alpha$, $T_\beta$ and $T_\gamma$. For each $m \in \{ \alpha,\beta,\gamma \}$ the triangle $T_m$ is opposite to the angle $m$. Let us choose points $x$ and $y$ from the medial axis of $T$ so that $x$ lies on the bisector of $\alpha$ and $y$ lies on the bisector of $\beta$, see Figure \ref{fig:geodesic in T}.
  \begin{figure}[!ht]\centering
    \includegraphics[scale=1]{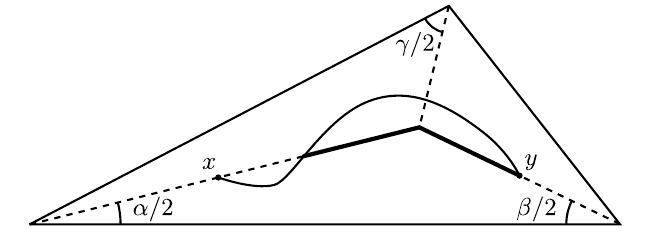}
    \caption{A curve $\Gamma$ joining $x$ and $y$ in $T$. If a part of $\Gamma$ goes outside the lower subtriangle, it can be replaced by a part of medial axis.}\label{fig:geodesic in T}
  \end{figure}
  
  The quasihyperbolic geodesic $\Gamma$ from $x$ to $y$ has to be contained in $T_\gamma$, because otherwise we could shorten the quasihyperbolic length of $\Gamma$ by replacing the part that is outside $T_\gamma$ by a part of the medial axis, see Figure \ref{fig:geodesic in T}.
  
  For $m \in \{ \alpha,\beta \}$ we denote the line segment that is a part of medial axis and starts from angle $m$ by $l_m$. We can see that if $\Gamma$ leaves from one side of $T_\gamma$, let say $l_\alpha$, it cannot come back to it, as otherwise the part could be replaced again with a line segment that is a subarc of $l_\alpha$. Thus we now that $\Gamma$ consists of three parts: $\Gamma_1$ is in $l_\alpha$, $\Gamma_2$ is in the interior of $T_\gamma$ and $\Gamma_3$ is in $l_\beta$. Here $\Gamma_1$ and $\Gamma_3$ may consists only from a single point. Note also that $\Gamma_2$ is determined by only one side of $T$ and thus it is a circular arc, because in half-plane quasihyperbolic geodesics agree with hyperbolic geodesics.
  
  Let us fix two vertices of $T$: the vertex at angle $\alpha$ is 0 and the vertex at angle $\beta$ is 1, see Figure \ref{QH geodesin in T2}. By \cite{Mar85} quasihyperbolic geodesics are smooth curves and we can observe that the radius of $\Gamma_2$ is
  \[
    r = \frac{\sin \tfrac\alpha2 \sin \tfrac\beta2 }{\sin \tfrac\alpha2 + \sin \tfrac\beta2}.
  \]
  \begin{figure}[!ht]\centering
    \includegraphics[scale=1]{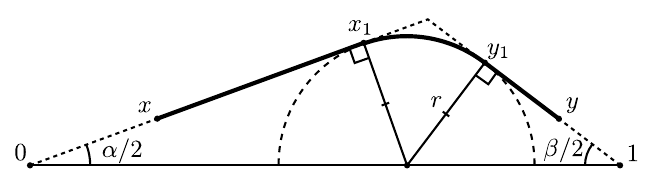}
    \caption{The quasihyperbolic geodesic $\Gamma$ joining $x$ and $y$ consists of three parts.}\label{QH geodesin in T2}
  \end{figure}
  
  We denote $\{ x_1 \} = \Gamma_1 \cap \Gamma_2$ and $\{ y_1 \} = \Gamma_2 \cap \Gamma_3$. Now $|x_1| = r/\tan \tfrac\alpha2$, $|y_1-1| = r/\tan \tfrac\beta2$ and
  \begin{eqnarray*}
    k_T(x,x_1) & = & \int_{|x|}^{|x_1|} \frac{dt}{t \sin \tfrac\alpha2} = \frac{\log |x_1|-\log |x|}{\sin \tfrac\alpha2} = \frac{\log r-\log (\tan \tfrac\alpha2)-\log |x|}{\sin \tfrac\alpha2},\\
    k_T(y,y_1) & = & \frac{\log r-\log (\tan \tfrac\beta2)-\log |y-1|}{\sin \tfrac\beta2}.
  \end{eqnarray*}
  
  Next we add a new condition for the points $x$ and $y$. We want that neither $\Gamma_1$ nor $\Gamma_3$ consist of a single point and thus we require that $|x| = |y-1| = \varepsilon$ for small enough $\varepsilon$.
  
  Now
  \[
    k_T(x,y) > k_T(x,x_1) + k_T(y_1,y) = C + \left( \frac{1}{\sin \tfrac\alpha2} + \frac{1}{\sin \tfrac\beta2} \right) (-\log |x|),
  \]
  where
  \[
    C = \frac{\log r-\log (\tan \tfrac\alpha2)}{\sin \tfrac\alpha2} + \frac{\log r-\log (\tan \tfrac\beta2)}{\sin \tfrac\beta2}
  \]
  does not depend on $|x|$.
  
  Denote $y' \in l_\beta$ be the point with $d( x , \partial T) = d( y' , \partial T)$. By assumption $\beta < \pi/2$ and thus
  \[
    |x-y| \le |x-y'| = 1-|x| \frac{\sin \tfrac{\alpha+\beta}{2}}{\sin \tfrac\beta2} = 1-D |x|,
  \]
  where $D$ is a constant not depending on $|x|$. Since $|x| = |y-1|$ and $\alpha \le \beta$ we have $d(x, \partial T) \le d(y, \partial T)$ and
  \begin{eqnarray*}
    j_T(x,y) & \le & \log \left( 1+\frac{1-D |x|}{|x|\sin \tfrac\alpha2} \right) \le \log \left( 1+\frac{1}{|x|\sin \tfrac\alpha2} \right)\\
    & \le & \log \left( \left( 1+\frac{1}{\sin \tfrac\alpha2} \right) \frac{1}{|x|} \right) = E-\log |x|,
  \end{eqnarray*}
  where $E$ does not depend on $|x|$.
  
  Putting the estimates together give us
  \begin{eqnarray*}
    \frac{k_T(x,y)}{j_T(x,y)} & \ge & \frac{C + \Big( \frac{1}{\sin \tfrac\alpha2} + \frac{1}{\sin \tfrac\beta2} \Big) (-\log |x|)}{E-\log |x|}\\
    & \ge & \lim_{|x| \to 0} \frac{C + \Big( \frac{1}{\sin \tfrac\alpha2} + \frac{1}{\sin \tfrac\beta2} \Big) (-\log |x|)}{E-\log |x|} = \frac{1}{\sin \tfrac\alpha2} + \frac{1}{\sin \tfrac\beta2}
  \end{eqnarray*}
  and the assertion follows.
\end{proof}

\begin{rem}
  Theorem \ref{UC in triangle} gives a lower bound for the uniformity constant of a triangle $T$. An upper bound
  \[
    A_T \le \frac{1}{\cos \tfrac\gamma2} \left( 2+ \frac{1}{\sin \tfrac\alpha2} + \frac{1}{\sin \tfrac\beta2} \right)
  \]
  is given \cite[5.38]{Lin05}.  
\end{rem}

Next we prove a lower bound for the uniformity constant in an ellipse and in the complement of the unit ball.

\begin{thm}\label{thm:UC for ellipse}
  Let $E \subset \R$ be an ellipse and let the ratio of the major and the minor axes be $c \ge 1$. Then
  \[
    \max \left\{ 2,\frac{2 \sqrt{c^2-1} \arcsin \sqrt{1-\frac{1}{c^2}}}{\log (2c^2-1)} \right\} \le A_E \le 2 c^4.
  \]
\end{thm}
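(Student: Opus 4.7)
The statement has two independent halves---a two-term lower bound and an upper bound on $A_E$---which I would prove separately. For the upper bound $A_E \le 2c^4$, normalize so that $E$ has semi-axes $c$ and $1$ and consider the affine map $f\colon E \to \B$, $f(x,y) = (x/c,y)$. Since $|v|/c \le |df(v)| \le |v|$, the map $f$ is $c$-bilipschitz (viewed symmetrically as $|v|/c \le |f(v)| \le c|v|$), and correspondingly $d_\B(f(z))$ and $d_E(z)$ differ by at most a factor of $c$. The standard bilipschitz distortion estimates then give $k_E(x,y) \le c^2\,k_\B(f(x),f(y))$ and $j_\B(f(x),f(y)) \le c^2\, j_E(x,y)$, the $j$-bound using Bernoulli's inequality $\log(1+c^2 t) \le c^2\log(1+t)$ for $c \ge 1$. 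Ptolemy's equality for concyclic points gives $P(\B) = 1$, hence $A_\B = 2$ by the introduction's identity $A_G = 1+P(G)$ for the disk, and assembling yields $A_E \le c^2 \cdot A_\B \cdot c^2 = 2c^4$.

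For the lower bounds, I would evaluate $k_E/j_E$ at pairs of points $x = (t,0)$, $y = (-t,0)$ symmetric about the centre on the major axis. The crucial geometric observation is the monotonicity $d_E((u,v)) \le d_E((u,0))$ for all $(u,v) \in E$: taking a nearest boundary point $(p_1^{\ast},p_2^{\ast})$ of $(u,0)$ with $p_2^{\ast} \ge 0$ and $v \ge 0$, the translate $(p_1^{\ast},p_2^{\ast}+v)$ satisfies $(p_1^{\ast})^2/c^2+(p_2^{\ast}+v)^2 \ge 1+v^2 > 1$ and thus lies outside $E$, so the segment from $(u,v)$ toward it must exit $\partial E$ within distance $d_E((u,0))$; the case $v \le 0$ is symmetric. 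Consequently projecting any rectifiable curve onto the major axis does not increase its quasihyperbolic length, and $k_E(x,y) = \int_{-t}^{t} du/d_E((u,0))$. The formulas $d_E((u,0)) = \sqrt{1-u^2/(c^2-1)}$ for $|u| \le (c^2-1)/c$ and $d_E((u,0)) = c-|u|$ otherwise (obtained by the standard perpendicular-foot computation on the ellipse) make this integral explicit.

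Taking $t = (c^2-1)/c$ yields $d_E(x) = 1/c$ and $j_E(x,y) = \log(2c^2-1)$, while the substitution $u = \sqrt{c^2-1}\sin\theta$ reduces the $k$-integral to $2\sqrt{c^2-1}\arcsin\sqrt{1-1/c^2}$; the ratio gives the $\arcsin$ lower bound. Taking instead $t = c-\epsilon$ and letting $\epsilon \to 0^{+}$, both quantities diverge with $j_E \sim \log(1/\epsilon)$ and $k_E \sim 2\log(1/\epsilon)$---the factor $2$ coming from the singular integrand $1/(c-|u|)$ on both sides of the axis---so the limiting ratio $2$ gives $A_E \ge 2$. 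The main obstacle is establishing the monotonicity $d_E((u,v)) \le d_E((u,0))$ rigorously (and the associated reduction of $k_E$ to the axis integral); once that is in hand, both lower bounds follow by direct integration and the upper bound is a routine bilipschitz transfer from the disk.
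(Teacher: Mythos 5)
Your proposal is correct and follows the paper's overall strategy (bilipschitz transfer from the disk for the upper bound, two explicit symmetric point configurations for the lower bounds), but it differs in two localized and worthwhile ways. First, where the paper simply asserts that ``by symmetry and convexity'' the quasihyperbolic geodesic between the major-axis points is the segment $[x,y]$, you actually prove the needed inequality $k_E(x,y)\ge\int_{-t}^{t}du/d_E((u,0))$ via the monotonicity $d_E((u,v))\le d_E((u,0))$ (your translation-of-the-nearest-boundary-point argument is valid: $(p_1^{\ast})^2/c^2+(p_2^{\ast}+v)^2\ge 1+v^2>1$, so the translated segment of length $d_E((u,0))$ must cross $\partial E$) together with projection of an arbitrary curve onto the axis; this fills a genuine gap in the paper's exposition. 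Second, for the bound $A_E\ge 2$ you take $x=(c-\epsilon,0)$, $y=(-c+\epsilon,0)$ near the vertices of the major axis, getting $k_E\sim 2\log(1/\epsilon)$ against $j_E\sim\log(1/\epsilon)$, whereas the paper uses the minor-axis points $\pm si$ with $s\to b$ and $d_E=b-|t|$; both limits give $2$ and are equally elementary. Your upper bound is the same $L^4$ bilipschitz argument as the paper's (which cites Lemma \ref{lem:Vuo88 lemma}); your sketch of the distortion factors is not tight but is certainly sufficient for $A_E\le c^4A_{\B}$. The only soft spot is your justification of $A_{\B}=2$: deducing it from $P(\B)=1$ via the identity $A_G=1+P(G)$ leans on a relation the paper only asserts without proof in the introduction; it is cleaner to cite the classical fact $A_{\Bn}=2$ (e.g.\ from Lindén's thesis) directly, as the paper implicitly does.
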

\begin{proof}
  We denote the major axis of $E$ by $a$ and the minor axis by $b$. Now $c=a/b$.

  Let us prove first the upper bound. We choose $G=\B$ and $f(x,y)=(ax,by)$. Now $f$ is $c$-bilipschitz and $f(\B) = E$. By Lemma~\ref{lem:Vuo88 lemma} we have $A_E \le c^4 A_{\B} = 2c^4$.
  
  We prove next the lower bound. We consider points $x,y \in E \cap \mathbb{R}$ and choose $-a < x < y < a$. By symmetry and convexity of $E$ it is clear that the quasihyperbolic geodesic from $x$ to $y$ in $E$ is the line segment $[x,y]$. By choosing $x = b^2/a-a$ and $y = a-b^2/a$ gives
  \[
    k_E(x,y) = 2 \int_0^{a-b^2/a}\frac{dt}{b\sqrt{1-t^2/(a^2-b^2)}} = 2 \frac{\sqrt{a^2-b^2}}{b} \arcsin \frac{\sqrt{a^2-b^2}}{a}
  \]
  and $j_E(x,y) = \log(2(a/b)^2-1)$ implying
  \[
    A_E \ge \frac{2 \sqrt{c^2-1} \arcsin \sqrt{1-\frac{1}{c^2}}}{\log (2c^2-1)}.
  \]
  
  Let $x=si$ and $y=-si$ for $s \in (0,b)$. Now
  \[
    k_E(x,y) = 2 \int_0^s \frac{dt}{b-t} = 2(\log b-\log(b-s))
  \]
  and
  \[
    j_E(x,y) = \log \left( 1+\frac{2s}{b-s} \right) =\log(b+s)-\log(b-s).
  \]
  Because
  \[
    A_E \ge \lim_{s \to b} \frac{k_E(x,y)}{j_E(x,y)} = \lim_{s \to b} \frac{2/(b-s)}{1/(b+s)+1/(b-s)} =2,
  \]
  the assertion follows.
\end{proof}

\begin{prop}
  Domain $G=\Rn \setminus \overline{\Bn}$ is uniform and
  \[
    \frac{\pi}{\log 3} \le A_G \le \frac{4\pi}{\log 3}.
  \]
\end{prop}
\begin{proof}
  We prove first the lower bound. Let $x \in G$ and choose $y=-x$. Now by \eqref{martin-osgood formula}
  \[
    \frac{k_G(x,y)}{j_G(x,y)} \ge \frac{k_{\Rn \setminus \{ 0 \}}(x,y)}{j_G(x,y)} = \frac{\pi}{\log \left( 1+\frac{2|x|}{|x|-1} \right)} \ge \lim_{|x| \to \infty} \frac{\pi}{\log \left( 1+\frac{2|x|}{|x|-1} \right)} = \frac{\pi}{\log 3}
  \]
  and thus $A_G \ge \pi / ( \log 3 )$.
  
  Next we consider the upper bound. We use the following 4 results:

  \begin{enumerate}
  
  \item[(i)] The domain $\Bn \setminus \{ 0 \}$ is uniform with $A_{\Bn \setminus \{ 0 \}}=\tfrac{\pi}{\log 3}$. \cite[Theorem 1.9]{Lin05}
  
  \item[(ii)] For any open set $G \subset \Rn$ and for all $x,y \in G$ we have
  \[
    j_G(x,y) \le \delta_G(x,y) \le 2 j_G(x,y),
  \]
  where $\delta_G$ is the Seittenranta metric in $G$. \cite[Theorem 3.4]{Sei99}
  
  \item[(iii)] For any domain $G \subset \Rn$ and for all $x,y \in G$ we have
  \[
    k_G(x,y) \le \rho_G(x,y) \le 2 k_G(x,y),
  \]
  where $\rho_G$ is the Ferrand metric in $G$. \cite{Fer88}
  
  \item[(iv)] Metrics $\delta_G$ and $\rho_G$ are M\"obius invariant.

  \end{enumerate} 
  
  Let us fix points $x,y \in G$.  We denote M\"obius mapping $f(z) = 1/z$ and observe that $f(G) = \Bn \setminus \{ 0 \}$. By (ii) and (iii) we have
  \[
    \frac{k_G(x,y)}{j_G(x,y)} \le 2 \frac{\rho_G(x,y)}{\delta_G(x,y)} \quad \textrm{and} \quad \frac{\rho_{f(G)}(f(x),f(y))}{\delta_{f(G)}(f(x),f(y))} \le 2 \frac{k_{f(G)}(f(x),f(y))}{j_{f(G)}(f(x),f(y))}.
  \]
  These inequalities together with (i) and (iv) give
  \begin{eqnarray*}
    \frac{k_G(x,y)}{j_G(x,y)} & \le & 2 \frac{\rho_G(x,y)}{\delta_G(x,y)} = 2 \frac{\rho_{f(G)}(f(x),f(y))}{\delta_{f(G)}(f(x),f(y))} \le 4 \frac{k_{f(G)}(f(x),f(y))}{j_{f(G)}(f(x),f(y))}\\
    & \le & A_{f(G)} = \frac{4\pi}{\log 3}
  \end{eqnarray*}
  and the assertion follows.
\end{proof}

Our final uniformity constant estimate considers twice punctured space and it is not in any connection with the Ptolemy constant as the boundary of the domain is clearly not a Jordan curve.

\begin{prop}\label{thm: UC for twice punctured plane}
  For $G = \R \setminus \{ -e_1,e_1 \}$ we have 
  \[
    A_G \ge \frac{2 \arcsinh \beta}{\log \left( 1+\frac{2\beta}{\sqrt{1+\beta^2}} \right)} \approx 3.5131,
  \]
  where $\beta \approx 3.1841$ is the solution of $\arcsinh t+\arctan t = \pi$ for $t>0$.
\end{prop}
\begin{proof}
  We consider $k_G(t e_2,-t e_2)$ for $t>0$ and $G = \R \setminus \{ -e_1,e_1 \}$. By \cite{MarOsg86} and \cite{Vai09} we know the geodesics joining any two points in $G$. For small $t$ the geodesic segment between $x=t e_2$ and $-x$ is the line segment $[x,-x]$ and for large $t$ there exists more than one geodesic segments joining $x$ and $-x$. In this case the geodesics are circular arcs with center at $-e_1$ or $e_1$. There is also a value of $t$ such that geodesics joining $x$ and $-x$ are circular arcs and the line segment $[-x,x]$. We show that this limiting value of $t$ is $\beta$.

  We find formula for quasihyperbolic length of line segment $[x,-x]$. By definition
  \[
    \ell_{k_G} ([t e_2,-t e_2]) = 2 \ell_{k_G} ([0,t e_2]) = 2 \int_0^t \frac{dz}{\sqrt{1+z^2}} = 2 \arcsinh t.
  \]

  Next we find formula for the quasihyperbolic length for the longer circular arc $C(x,-x)$ with center $e_1$ and joining $x$ and $-x$. By definition
  \[
    \ell_{k_G} (C(x,-x)) = 2\pi- \measuredangle (x,e_1,-x) = 2(\pi -\arctan t).
  \]

  Now it is clear that
  \begin{eqnarray*}
    k_G(x,-x) & = & \ell_{k_G} ([x,-x]) \wedge \ell_{k_G} (C(x,-x))\\
    & = & 2 \min \{ \arcsinh t,\pi -\arctan t \}
  \end{eqnarray*}
  and $\arcsinh t = (\pi -\arctan t)$ is equivalent to $t=\beta$.

  We next show that the solution $\beta$ is unique. Consider the function $f(t) = \arcsinh t - (\pi -\arctan t)$. Since $f'(x) = (1+\sqrt{1+t^2})/(1+t^2) > 0$ the function $f(t)$ is strictly increasing and hence $\beta$ is a unique solution. Since the functions $\arcsinh t$ and $\pi -\arctan t$ are strictly monotone, it is clear that $k_G(x,-x)$ obtains its maximum at $\beta$.

  By definition
  \[
    j_G(x,-x) = \log \left( 1+ \frac{2t}{\sqrt{1+t^2}} \right)
  \]
  and thus
  \[
    \sup_{y,z \in G} \frac{k_G(y,z)}{j_G(y,z)} \ge \frac{k_G(\beta e_1,-\beta e_1)}{j_G(\beta e_1,-\beta e_1)} = \frac{2 \arcsinh \beta}{\log \left( 1+\frac{2\beta}{\sqrt{1+\beta^2}} \right)}
  \]
  and the assertion follows.
\end{proof}

\textbf{Acknowledgements.} The authors thank M. Vuorinen for the introduction to the topic and the useful comments on the manuscript.


\addresseshere

\newpage
\section{Appendix}

In this Appendix we provide proofs for results intoduced in Section \emph{Preliminary results}.

\begin{proof}[Proof of Lemma~\ref{prop:circumcenter}]
  The circumcenter is the intersection of perpendicular bisectors of line segments $[a,b]$ and $[b,c]$. Thus we have $(a+b)/2+i(b-a)s=(b+c)/2+i(c-b)t$ and $(\overline{a}+\overline{b})/2+i(\overline{b}-\overline{a})s=(\overline{b}+\overline{c})/2+i(\overline{c}-\overline{b})t$ which give
  \[
    s=\frac{i}{2} \frac{\overline{a}(b-c)+\overline{b}(a-c)+\overline{c}(2c-a-b)}{\overline{a}(b-c)+\overline{b}(c-a)+\overline{c}(a-b)}
  \]
  and the circumcenter is
  \[
    \frac{a+b}{2}+i(b-a)s = \frac{(b-c)|a|^2+(c-a)|b|^2+(a-b)|c|^2}{(b-c)\bar{a}+(c-a)\bar{b}+(a-b)\bar{c}}. \qedhere
  \]
\end{proof}

\begin{proof}[Proof of Lemma~\ref{lem:circle_outer_angle}]
  Let $\alpha + \gamma \le \beta + \delta$. Now $\alpha + \gamma \le \pi \le \beta + \delta$ and if $\alpha + \gamma = \pi = \beta + \delta$, then the assertion follows.
  
  We assume $\alpha + \gamma < \beta + \delta$. Now $\mathcal{C}_{A}$ and $\mathcal{C}_{C}$ intersect at points $b$ and $d$. Because $\alpha + \gamma < \pi$, the circular arcs corresponding the angles $\alpha$ and $\gamma$ are subarcs of a semicircle, see Figure \ref{fig:outer_angle}.

\begin{figure}[!ht]\centering
\includegraphics[width=\textwidth]{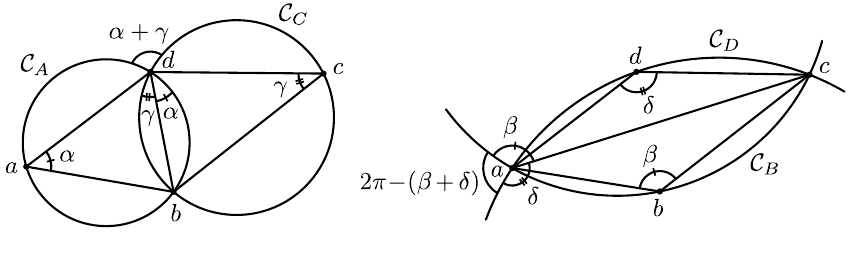}
\caption{Left: Circles $\mathcal{C}_{A} = \mathcal{C}(a,b,d)$ and $\mathcal{C}_{C} = \mathcal{C}(b,c,d)$ in Lemma~\ref{lem:circle_outer_angle}. Right: Circles $\mathcal{C}_{B} = \mathcal{C}(a,b,c)$ ja $\mathcal{C}_{D} = \mathcal{C}(c,d,a)$ in Lemma~\ref{lem:circle_outer_angle}.\label{fig:outer_angle}}
\end{figure}

The angle between circle $\mathcal{C}_{A}$ and line $\ell (b, d)$ is $\alpha$, and the angle between circle $\mathcal{C}_{C}$ and line $\ell (b, d)$ is $\gamma$. Thus the outer angle between circles $\mathcal{C}_{A}$ and $\mathcal{C}_{C}$ is $\alpha + \gamma$.

The situation for circles $\mathcal{C}_{B}$ and $\mathcal{C}_{D}$ is represented in Figure \ref{fig:outer_angle}. Similar computation as above gives that the angle between circles $\mathcal{C}_{B}$ and $\mathcal{C}$ is $2\pi - (\beta + \delta) = \alpha + \gamma$.
\end{proof}

\begin{proof}[Proof of Lemma~\ref{lem:convex_polygon_Stheta}]
  We may assume $\alpha + \gamma \leq \pi \leq \beta + \delta = 2\pi - (\alpha + \gamma)$. By Lemma~\ref{lem:circle_outer_angle} the angle between circles $\mathcal{C}_{B} = \mathcal{C}(a,b,c)$ and $\mathcal{C}_{D} = \mathcal{C}(c,d,a)$ is $\alpha + \gamma = 2\pi - (\beta + \delta)$. Thus we know that there exists a M\"obius transformation $m$ with $m(a)=0$, $m(b)=1$ and $m(c)=\infty$, and which maps the points $a, b, c, d$ in the same order to the curve $S_\theta$ $J_{\alpha + \gamma}$. Both circles $\mathcal{C}_{B}$ and $\mathcal{C}_{D}$ are mapped to a line and the interior of $\mathcal{C}_{B}$ is mapped to the upper half space, see Figure \ref{fig:convex_polygon_Stheta}.
\end{proof}

\begin{figure}[!ht]\centering
\includegraphics[width=\textwidth]{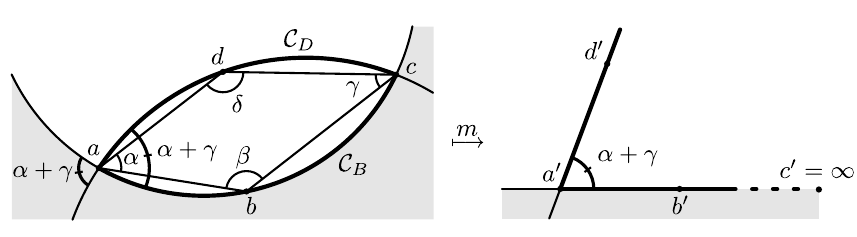}
\caption{The mapping $m$ in Lemma~\ref{lem:convex_polygon_Stheta}. \label{fig:convex_polygon_Stheta}}
\end{figure}

\begin{proof}[Proof of Lemma~\ref{lem:simplequadrilateral}]
  If $(a,b,c,d)$ is convex, then the assertion follows from Lemma~\ref{lem:convex_polygon_Stheta}.
  
  If $(a,b,c,d)$ is not convex, then we may assume that $\alpha + \gamma \leq \pi \leq \beta + \delta$ and $\beta > \pi$. We show that there exists a domain $D$ such that the points $a,b,c,d$ are in the same order in $\partial D$ and $\partial D$ consists of two circular arcs with angle $\alpha+\gamma$. Now $m$ can be found as in the proof of Lemma~\ref{lem:convex_polygon_Stheta}.
  
  Let $\mathcal{C}_{B} = \mathcal{C}(a,b,c)$ and $\mathcal{C}_{D} = \mathcal{C}(c,d,a)$. We choose $D$ so that $\partial D \subset \mathcal{C}_{B} \cup \mathcal{C}_{D}$ and $a,b,c,d \in \partial D$, see Figure~\ref{fig:ymp_konk_kulma}.

\begin{figure}[!ht]\centering
\includegraphics[scale=1]{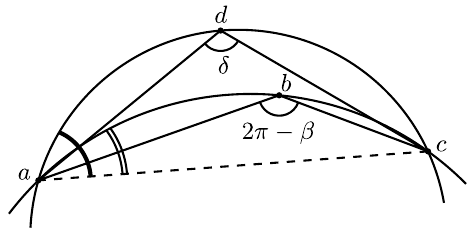}
\caption{Proof of Lemma~\ref{lem:simplequadrilateral}. Thick line represents angle $\pi - \delta$ and double line angle $\beta - \pi$. \label{fig:ymp_konk_kulma}}
\end{figure}

  Now $\measuredangle(a,b,c) = 2\pi-\beta$ and thus $\measuredangle(c,a,d) = \pi-(2\pi-\beta) = \beta-\pi$. Similarly, $\measuredangle(a,d,c) = \delta$ and the angle between $\mathcal{C}_{D}$ and line $\ell (a,c)$ is $\pi-\delta$. The angle between $\mathcal{C}_{B}$ and $\mathcal{C}_{D}$ is now
  \[
    (\pi - \delta) - (\beta - \pi) = 2\pi - (\beta + \delta) = \alpha + \gamma
  \]
  and the assertion follows.
\end{proof}

\begin{proof}[Proof of Lemma~\ref{prop:quadrilateral}]
  By Lemma~\ref{lem:simplequadrilateral} we can map a simple quadrilateral $abcd$ by a M\"obius transformation $m_1$ to $S_\theta$ in a way that $m_1(a)=0$, $m_1(b)=1$, $m_1(c)=\infty$ and $m_1(d)=te^{i\theta}$, where $t>0$ and $\theta=(\alpha+\gamma)/2$. Similarly there is a M\"obius transformation $m_2$ which takes the quadrilateral $m(a)m(b)m(c)m(d)$ to $S_{\theta}$ with $m_2(m(a))=0$, $m_2(m(b))=1$, $m_2(m(c))=\infty$ and $m_1(m(d))=se^{i\theta}$. 
  
  Let us consider parallelogram $m(a)=0$, $m(b)=r$, $m(c)=r+ue^{i\theta/2}$ and $m(d)=ue^{i\theta/2}$ for $r,u>0$. Since cross ratio is invariant under M\"obius transformation we obtain
  \[
    [m(a),m(b),m(c),m(d)] = 1-\left( \frac{u}{r} \right)^2 e^{i\theta}  \quad \textrm{and} \quad [0,1,\infty,s e^{i\theta}] = 1-s e^{i\theta}.
  \]
  Choosing $s=u/r >0$ we obtain $m=m_2^{-1} \circ m_1$ and the assertion follows.
\end{proof}

\begin{proof}[Proof of Lemma~\ref{lem:visualanglemetricH2}]
  The largest value follows from \cite[section 3.3]{KleLinVuoWan14} and the smallest value is clear as then $|\measuredangle (x,z,y)|=0$.
\end{proof}

\begin{proof}[Proof of Lemma~\ref{lem:kompleksilukutulos}]
  The claim is equivalent to
  \[
    \frac{\arctan \left( c\cdot y/x \right)}{c\cdot y/x} \; > \; \frac{\arctan y/x}{y/x}
  \]
  and we prove this inequality by showing that the function $f(t) = (\arctan t)/t$ is strictly decreasing and $f(t) \to 1$ as $t \to 0$.
  
  By differentiation we obtain
  \[
    f'(t) = \frac{t-(1+t^2)\arctan t}{(1+t^2)t^2} = \frac{g(t)}{(1+t^2)t^2},
  \]
  where function $g'(t) = -2t\arctan t$ is negative for $t>0$.
  
  The limit $f(t) \to 1$ as $t \to 0$ is obtained by l'Hôpital's rule.
\end{proof}

\begin{proof}[Proof of Lemma~\ref{lem:apulause_4.12}]
  By differentiation $(x_0/a)^2+(y_0/b)^2 = 1$ gives $2x/a^2+2yy'/b^2=0$, which implies $y'=-xb^2/(ya^2)$. The normal of the tangent of $\partial E$ at point $(x_0,y_0)$ is
  \[
    y-y_0=\frac{y_0a^2}{x_0 b^2}(x-x_0)
  \]
  and since $|x|<a$ the normal at $(x_0,y_0)$ intersects the real axis at $(x_0(1-b^2/a^2),0)$.
  
  The maximal disk $B^2(z,r)$ contained in $E$ intersects $\partial E$ at $(x_0,y_0)$, where $x_0=t/(1-b^2/a^2))$, whenever $|t|/(1-b^2/a^2) \le a$, which is equivalent to $|t|\le a-b^2/a$. The first part of the assertion follows, because now
  \[
    y_0 = \pm b\sqrt{ \left( \frac{at}{a^2-b^2} \right)^2 -1 }
  \]
  and
  \[
    d(z,\partial E) = \sqrt{(t-x_0)^2+y_0^2} = b \sqrt{1-\frac{t^2}{a^2-b^2}}.
  \]
  
  Let us next consider the case $a-b^2/a < |t| < a$. The curvature of $\partial E$ at $(\pm a,0)$ is $b^2/a$. If $a-b^2/a < |t| < a$, then $a-|t| < b^2/a$ and thus the maximal disk $B^2(z,r)$ contained in $E$ intersects $\partial E$ at $(\pm a,0)$. Now the assertion follows easily.
\end{proof}

\end{document}